\title[On the Geometry of Conformally Stationary Spaces]{On the Geometry of Conformally Stationary Lorentz Spaces}
\newtheorem{theorem}{Theorem}[section]
\newtheorem{lemma}[theorem]{Lemma}
\newtheorem{proposition}[theorem]{Proposition}
\newtheorem{corollary}[theorem]{Corollary}
\theoremstyle{definition}
\newtheorem{definition}[theorem]{Definition}
\theoremstyle{example}
\newtheorem{example}[theorem]{Example}
\theoremstyle{remark}
\newtheorem{remark}[theorem]{Remark}
\numberwithin{equation}{section}
\author{F. Camargo}
\address{Departamento de Matem\'atica e Estat\'{\i}stica,
Universidade Federal de Campina Grande, Campina Grande,
Para\'{\i}ba, Brazil. 58109-970} \email{fernandaecc@dme.ufcg.edu.br}
\author{A. Caminha}
\address{Departamento de Matem\'atica, Universidade Federal do Cear\'a, Fortaleza,
Cear\'a, Brazil. 60455-760} \email{antonio.caminha@gmail.com}
\author{H. de Lima}
\address{Departamento de Matem\'atica e Estat\'{\i}stica,
Universidade Federal de Campina Grande, Campina Grande,
Para\'{\i}ba, Brazil. 58109-970} \email{henrique@dme.ufcg.edu.br}
\author{M. Vel\'asquez}
\address{Departamento de Matem\'atica, Universidade Federal do Cear\'a, Fortaleza,
Cear\'a, Brazil. 60455-760} \email{marcolazarovelasquez@gmail.com}
\subjclass[2000]{Primary 53C42; Secondary 53B30, 53C50, 53Z05,
83C99}
\keywords{Higher order mean curvatures; $r$-stability, Conformally
Stationary Spacetimes, de Sitter space}
\thanks{The second author is partially supported by CNPq, Brazil. The third author is partially supported by CNPq/FAPESQ/PPP, Brazil.
The last author is supported by CAPES, Brazil.}
\begin{document}

\maketitle

\begin{abstract}
In this paper we study several aspects of the geometry of
conformally stationary Lorentz manifolds, and particularly of GRW
spaces, due to the presence of a closed conformal vector field. More
precisely, we begin by extending to these spaces a result of J.
Simons on the minimality of cones in Euclidean space, and apply it
to the construction of complete, noncompact maximal submanifolds of
both de Sitter and anti-de Sitter spaces. Then we state and prove
very general Bernstein-type theorems for spacelike hypersurfaces in
conformally stationary Lorentz manifolds, one of which not assuming
the hypersurface to be of constant mean curvature. Finally, we study
the strong $r$-stability of spacelike hypersurfaces of constant
$r$-th mean curvature in a conformally stationary Lorentz manifold
of constant sectional curvature, extending previous results in the
current literature.
\end{abstract}

\section{Introduction}

An important class of Lorentz manifolds is formed by the so-called stationary Lorentz manifolds. Following~\cite{Wald:84}, Chapter $6$, we say that a Lorentz manifold is stationary if there exists a one-parameter group of isometries whose orbits are timelike curves; for spacetimes, this group of isometries expresses time translation symmetry. From the mathematical viewpoint, a stationary Lorentz manifold is simply a Lorentz manifold furnished with a timelike Killing vector field, and a natural generalization is a {\em conformally stationary} Lorentz manifold, i.t., one furnished with a timelike {\em conformal} vector field. Our interest in conformally stationary Lorentz manifolds is due to the fact that, under an appropriate conformal change of metric, the conformal vector field turns into a Killing one, so that the new Lorentz manifold is now
stationary.

Our aim in this work is to understand the geometry of immersed submanifolds of conformally stationary Lorentz manifolds furnished with a closed conformal vector field, and we do this by approaching three different kinds of problems: the construction of examples of maximal submanifolds, the obtainance of general Bernstein-type results and the derivation of suitable criteria for $r$-stability. 

First of all (cf. Theorem~\ref{thm:maximal submanifolds}), we extend a classical theorem of Simons~\cite{Simons:68} to conformally stationary Lorentz manifolds and apply it to build maximal Lorentz immersions whenever the ambient space either is of constant sectional curvature or has vanishing Ricci curvature in the direction of the conformal vector field; in particular, we provide a geometrical construction for maximal immersions into both the anti-de Sitter and de Sitter spaces (cf. Corollaries~\ref{coro:maximal submanifold in the anti-de Sitter space} and~\ref{coro:maximal submanifold in de Sitter space}).

Related to Bernstein-type results, we study complete spacelike hypersurfaces, not necessarily of constant mean curvature, immersed into a conformally stationary Lorentz manifold of nonnegative Ricci curvature, furnished with homothetic non-parallel vector fields. By asking the second fundamental form of the hypersurface to be bounded and imposing a natural restriction on the projection of the conformal vector field of the ambient space, we classify such hypersurfaces in Theorems~\ref{thm:Bernstein-type} and \ref{thm:Bernstein-type 2}. In particular, we classify complete, finitely punctured spacelike radial graphs over $\mathbb R^n$ or $\mathbb H^n$ in $\mathbb L^{n+1}$, thus extending a classical result of J. Jellett~\cite{Jellett:53} to the Lorentz context.

Finally, in the last section we derive in Theorem~\ref{thm:r_estability} a sufficient criterion for strong $r$-stability of spacelike hypersurfaces of constant $r-$th mean curvature. This result extends, to the class of conformally stationary Lorentz manifolds, previous ones obtained in~\cite{BBC:08} and~\cite{Camargo:09} in the context of Generalized Robertson-Walker spacetimes.

\section{Conformally stationary Lorentz manifolds}

As in the previous section, let $\overline M^{n+1}$ be a Lorentz manifold. We recall that a vector field $V$ on $\overline M^{n+1}$ is conformal if
\begin{equation}
\mathcal L_V\langle\,\,,\,\,\rangle=2\psi\langle\,\,,\,\,\rangle
\end{equation}
for some function $\psi\in C^{\infty}(\overline M)$, where $\mathcal L$ stands for the Lie derivative of the Lorentz metric of $\overline M$; the function $\psi$ is the conformal factor of $V$. Any Lorentz manifold $\overline M^{n+1}$ possessing a globally defined, timelike conformal vector field is said to be conformally stationary.

Since $\mathcal L_V(X)=[V,X]$ for all $X\in\mathfrak X(\overline M)$, the tensorial character of $\mathcal L_V$ shows that $V\in\mathfrak X(\overline M)$ is conformal if and only if
\begin{equation}\label{eq:1.1}
\langle\overline\nabla_XV,Y \rangle+\langle X,\overline\nabla_YV\rangle=2\psi\langle X,Y\rangle,
\end{equation}
for all $X,Y\in\mathfrak X(\overline M)$. In particular, $V$ is Killing if and only if $\psi\equiv 0$, and (\ref{eq:1.1}) gives
$$\psi=\frac{1}{n+1}\text{div}_{\overline M}V.$$

Suppose that our conformally stationary spacetime $\overline
M^{n+1}$ is endowed with a closed conformal timelike vector field
$V$ with conformal factor $\psi$, i.e., one for which
\begin{equation}\label{eq:closed conformal}
\overline\nabla_XV=\psi X
\end{equation}
for all $X\in\mathfrak X(\overline M)$. We recall that such a $V$ is parallel if $\psi$ vanishes identically and homothetic if $\psi$ is constant\footnote{Here we diverge a little bit from other papers (e.g.~\cite{Kuhnel:97}), where homothetic means just conformal with constant conformal factor. The reason is economy: to avoid constantly writing {\em closed and homothetic}.}.

If $V$ has no singularities on an open set $\mathcal U\subset\overline M$, then the distribution $V^{\bot}$ on
$\mathcal U$ of vector fields orthogonal to $V$ is integrable, for if $X,Y\in V^{\bot}$, then
$$\langle[X,Y],V\rangle=\langle\overline\nabla_XY-\overline\nabla_YX,V\rangle=-\langle Y,\overline\nabla_XV\rangle+\langle X,\overline\nabla_YV\rangle=0.$$
We let $\Xi$ be a leaf of $V^{\bot}$ furnished with the induced metric, and $D$ its Levi-Civita connection.

From (\ref{eq:closed conformal}) we get
\begin{equation}\label{eq:gradient conformal}
\overline\nabla \langle V,V \rangle=2\psi V,
\end{equation}
so that $\langle V,V\rangle$ is constant on connected leaves of $V^{\bot}$. Computing covariant derivatives in (\ref{eq:gradient conformal}), we have
$$(\text{Hess}_{\overline M}\langle V,V \rangle)(X,Y)=2X(\psi)\langle V,Y \rangle + 2\psi^2\langle X,Y \rangle.$$
However, since both $\text{Hess}_{\overline M}$ and the metric are symmetric tensors, we get
$$X(\psi)\langle V,Y\rangle =Y(\psi)\langle V,X\rangle$$
for all $X,Y\in\mathfrak X(\overline M)$. Taking $Y=V$ we then arrive at
\begin{equation}\label{eq:gradient conformal factor}
\overline\nabla \psi= \frac{V(\psi)}{\langle V,V \rangle}V=-\nu(\psi)\nu ,
\end{equation}
where $\nu=\frac{V}{\sqrt{-\langle V,V \rangle}}$. Hence, $\psi$ is also constant on connected leaves of $V^{\bot}$. If $\Xi$ is such a leaf and $S_{\Xi}$ denotes its shape operator with respect to $\nu$, we get
\begin{equation}\label{eq:shape operator of a leaf in the perp distribution}
S_{\Xi}(X)=-\overline\nabla_X\nu=\psi X,
\end{equation}
and hence $\Xi$ is an umbilical hypersurface of $\overline M$.

Now we need the following

\begin{lemma}
If $\eta$ is another closed conformal vector field on $\overline M$ and $U=\eta+\langle\eta,\nu\rangle\nu$, then $U$ is closed conformal on $\Xi$, with conformal factor $\psi_U=\psi_{\eta}+\psi_{\nu}\langle\eta,\nu\rangle$.
\end{lemma}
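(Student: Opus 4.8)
The plan is to recognize $U$ as the orthogonal projection of $\eta$ onto the leaf $\Xi$ and then to transport the two closed conformal identities at our disposal through the Gauss decomposition. First I would observe that $U$ is in fact tangent to $\Xi$: since $\nu$ is unit timelike we have $\langle\nu,\nu\rangle=-1$, so that $\langle U,\nu\rangle=\langle\eta,\nu\rangle+\langle\eta,\nu\rangle\langle\nu,\nu\rangle=0$, that is, $U\in V^{\bot}$ and $U$ restricts to a genuine vector field on $\Xi$. I would also record the identity $\langle\eta,X\rangle=\langle U,X\rangle$, valid for every $X$ tangent to $\Xi$, since it will be used twice below.

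Next, fix $X\in\mathfrak X(\Xi)$ and compute $\overline\nabla_XU$ in $\overline M$, using: (i) that $\eta$ is closed conformal, $\overline\nabla_X\eta=\psi_{\eta}X$; (ii) that the umbilicity of $\Xi$ established just before the lemma lets us write $\overline\nabla_X\nu=\psi_{\nu}X$ along $\Xi$, with $\psi_{\nu}$ the corresponding factor (cf. (\ref{eq:shape operator of a leaf in the perp distribution})); and (iii) the Leibniz rule on $\langle\eta,\nu\rangle\nu$. Since $X\perp\nu$, differentiating the coefficient gives $X\langle\eta,\nu\rangle=\langle\overline\nabla_X\eta,\nu\rangle+\langle\eta,\overline\nabla_X\nu\rangle=\psi_{\nu}\langle\eta,X\rangle=\psi_{\nu}\langle U,X\rangle$, and therefore
\begin{equation*}
\overline\nabla_XU=\psi_{\eta}X+\bigl(X\langle\eta,\nu\rangle\bigr)\nu+\langle\eta,\nu\rangle\,\overline\nabla_X\nu=\bigl(\psi_{\eta}+\psi_{\nu}\langle\eta,\nu\rangle\bigr)X+\psi_{\nu}\langle U,X\rangle\nu.
\end{equation*}

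To finish, I would take the part tangent to $\Xi$. Since $D$ is the Levi-Civita connection of the induced metric, $D_XU$ is exactly the tangential component of $\overline\nabla_XU$, and the display above exhibits its normal component as $\psi_{\nu}\langle U,X\rangle\nu$; hence $D_XU=\bigl(\psi_{\eta}+\psi_{\nu}\langle\eta,\nu\rangle\bigr)X$ for all $X\in\mathfrak X(\Xi)$. Writing $\psi_U=\psi_{\eta}+\psi_{\nu}\langle\eta,\nu\rangle$, the relation $D_XU=\psi_UX$ holding for every tangent $X$ is precisely the assertion that $U$ is closed and conformal on $\Xi$ with conformal factor $\psi_U$: it forces $\langle D_XU,Y\rangle=\psi_U\langle X,Y\rangle$ to be symmetric in $X,Y$ (closedness of the dual one-form) and it makes $\mathcal L_U$ of the induced metric equal $2\psi_U$ times it.

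There is no substantive obstacle here; the statement is a short computation. The only place calling for care is the bookkeeping of tangential versus normal components in Lorentzian signature — in particular the $\langle\nu,\nu\rangle=-1$ that enters both the verification that $U$ is tangent and the projection at the end — together with checking that the term produced by differentiating the coefficient $\langle\eta,\nu\rangle$ recombines with $\psi_{\eta}X$ into exactly the claimed factor, leaving no spurious tangential remainder.
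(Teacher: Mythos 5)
Your argument is correct and is essentially the paper's own proof: both compute $\overline\nabla_XU$ for $X$ tangent to $\Xi$ via the Leibniz rule on $U=\eta+\langle\eta,\nu\rangle\nu$, use $\overline\nabla_X\eta=\psi_\eta X$ together with the umbilicity relation $\overline\nabla_X\nu=\psi_\nu X$ along the leaf, and then discard the normal component (proportional to $\nu$) to get $D_XU=(\psi_\eta+\psi_\nu\langle\eta,\nu\rangle)X$. The only cosmetic difference is that the paper writes the tangential projection explicitly as $\overline\nabla_XU+\langle\overline\nabla_XU,\nu\rangle\nu$ and lets the $\langle\nu,\nu\rangle=-1$ cancellations happen term by term, whereas you identify the normal component directly; the content is the same.
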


\begin{proof}
For $Z\in\mathfrak X(\Xi)$, it follows from $\langle Z,\nu\rangle=0$
that
\begin{equation}\label{eq:conformality of the projected field}
 \begin{split}
D_ZU&\,=(\overline\nabla_ZU)^{\top}=\overline\nabla_ZU+\langle\overline\nabla_ZU,\nu\rangle\nu\\
&\,=\overline\nabla_Z(\eta+\langle\eta,\nu\rangle\nu)+\langle\overline\nabla_Z(\eta+\langle \eta,\nu\rangle\nu),\nu\rangle\nu\\
&\,=\overline\nabla_Z\eta+Z\langle\eta,\nu\rangle\nu+\langle \eta,\nu\rangle\overline\nabla_Z\nu+\langle\overline\nabla_Z\eta,\nu\rangle\nu\\
&\,\,\,\,\,\,\,+Z\langle\eta,\nu\rangle\langle\nu,\nu\rangle\nu+\langle \eta,\nu\rangle\langle\overline\nabla_Z\nu,\nu\rangle\nu\\
&\,=\overline\nabla_Z\eta+\langle\eta,\nu\rangle\overline\nabla_Z\nu+\langle\overline\nabla_Z\eta,\nu\rangle\nu\\
&\,=\psi_{\eta}Z+\langle\eta,\nu\rangle\psi_{\nu}Z+\langle\psi_{\eta}Z,\nu\rangle\nu\\
&\,=(\psi_{\eta}+\langle\eta,\nu\rangle\psi_{\nu})Z.
 \end{split}
\end{equation}
\end{proof}

\begin{example}
Let $\mathbb L^{n+1}$ be the $(n+1)-$dimensional Lorentz space with
its usual scalar product $\langle\cdot,\cdot\rangle$, with respect
to the quadratic form $q(x)=\sum_{i=1}^nx_i^2-x_{n+1}^2$. For $n>2$,
the $n-$dimensional de Sitter space is the hyperquadric
$$\mathbb S_1^n=\{x\in\mathbb L^{n+1};\,\langle x,x\rangle=1\},$$
and also the Lorentz simply-connected space form of constant sectional curvature identically equal to $1$. The previous proposition teaches how to geometrically build closed conformal vector field on $\mathbb S_1^n$: since $\nu\in\mathfrak X(\mathbb L^{n+1})$ given by $\nu(x)=x$ is homothetic, choose a parallel $\eta\in\mathfrak X(\mathbb L^{n+1})$ and project it orthogonally onto the $\mathbb S_1^n$.
\end{example}

\begin{example}
Let $R^{n+1}_2$ denote $R^{n+1}$ furnished with the scalar product correspondent to the quadratic form $q(x)=\sum_{i=1}^{n-1}x_i^2-x_n^2-x_{n+1}^2$. For $n>1$, the $n-$dimensional anti-de Sitter space is the hyperquadric
$$\mathbb H_1^n=\{x\in\mathbb R^{n+1}_2;\,\langle x,x\rangle=-1\},$$
and also the Lorentz simply-connected space form of constant sectional curvature identically equal to $-1$. A construction similar to that of the previous example can obviously be made for $\mathbb H_1^n$.
\end{example}

Following~\cite{ABC:03}, a particular class of conformally stationary spacetimes is that of Generalized Robertson-Walker spaces (GRW for short), namely, warped products $\overline M^{n+1}=-I\times_{\phi}F^n$, where $I\subset\mathbb R$ is an open interval with the metric $-dt^2$, $F^n$ is an $n$-dimensional Riemannian manifold and $\phi:I\rightarrow\mathbb R$ is positive and smooth. For such a space, if $\pi_I:\overline M^{n+1}\rightarrow I$ is the canonical projection onto $I$, then the vector field
$$V=(\phi\circ\pi_I)\partial_t$$
is a conformal, timelike and closed, with conformal factor $\psi=\phi'\circ\pi_I$, where the prime denotes differentiation with respect to $t$. Moreover (cf.~\cite{Montiel:99}), for $t_0\in I$, the (spacelike) leaf $\Xi_{t_0}^n=\{t_0\}\times F^n$ is totally umbilical, with umbilicity factor $-\frac{\phi'(t_0)}{\phi(t_0)}$ with respect to the future-pointing unit normal vector field.

\begin{remark}
Conversely, let $\overline M$ be a general conformally stationary Lorentz manifold with closed conformal vector field $V$. If $p\in\overline M$ and $\Xi_p$ is the leaf of $V^{\bot}$ passing through $p$, then we can find a neighborhood $\mathcal U_p$ of $p$ in $\Xi_p$ and an open interval $I\subset\mathbb R$ containing $0$ such that the flow $\Psi$ of $V$ is defined on $\mathcal U_p$ for every $t\in I$. Besides, if $\overline M$ is timelike geodesically complete, S. Montiel~\cite{Montiel:99} proved that
$$\begin{array}{rccc}
\varphi:&\mathbb R\times\Xi_p&\longrightarrow&\overline M\\
&(t,p)&\mapsto&\Psi(t,p)
\end{array}$$
is a global parametrization on $\overline M$, such that $\overline M$ is isometric to the GRW $-\mathbb R\times_{\phi}\Xi_p$, where $\phi(q)=t\Leftrightarrow\Psi(-t,q)\in\Xi_p$.
\end{remark}

Let $x:M^n\rightarrow\overline M^{n+1}$ a spacelike immersion, that
is, the induced metric via $x$ on $M^n$ is Riemannian. According
to~\cite{ABC:03}, if $\overline M^{n+1}=-I\times_{\phi}F^n$ is a GRW
and $x:M^n\rightarrow\overline M^{n+1}$ is a complete spacelike
hypersurface such that $\phi\circ\pi_I$ is bounded on $M^n$, then
$\pi_F\big|_M:M^n\rightarrow F^n$ is necessarily a covering map. In
particular, if $M^n$ is closed, then $F^n$ is automatically closed.

Also, recall (cf. Chapter $7$ of~\cite{ONeill:83}) that a GRW as above has constant sectional curvature $c$ if and only if $F$ has constant sectional curvature $k$ and the warping function $\phi$ satisfies the ODE
\begin{equation}\label{eq:ode for constant sectional curvature}
 \frac{\phi''}{\phi}=c=\frac{(\phi')^2+k}{\phi^2}.
\end{equation}

\begin{example}
The de Sitter space $\mathbb S_1^{n+1}$ is an important particular example of GRW of constant sectional curvature $1$. In fact, it follows from {\rm(\ref{eq:ode for constant sectional curvature})} and the classification of simply connected Lorentz space forms that
$$\mathbb S_1^{n+1}\simeq-\mathbb R\times_{\cosh t}\mathbb S^n,$$
where $\mathbb S^n$ is the standard $n-$dimensional unit sphere in
Euclidean space. Hence, the vector field $V=(\sinh t)\partial_t$ is
a timelike closed conformal one. The {\em equator} of $\mathbb
S_1^{n+1}$ is the slice $\{0\}\times\mathbb S^n$, and the points
$(t,p)\in\mathbb S_1^{n+1}$ with $t<0$ {\rm(}resp. $t>0${\rm)} are
said to form the chronological past {\rm(}resp. future{\rm)} of
$\mathbb S_1^{n+1}$.
\end{example}

\begin{example}
Another important example is given by the anti-de Sitter space $\mathbb H_1^{n+1}$. Invoking once more {\rm(\ref{eq:ode for constant sectional curvature})} and the classification of simply connected Lorentz space forms, we get the isometry
$$\mathbb H_1^{n+1}\simeq-(0,\pi)\times_{\sin t}\mathbb H^n,$$
where $\mathbb H^n$ is the $n-$dimensional hyperbolic space. Therefore, the vector field $V=(\sin t)\partial_t$ is timelike closed and conformal in $\mathbb H_1^{n+1}$.
\end{example}

\section{Maximal submanifolds of conformally stationary spaces}

In this section we generalize to conformally stationary Lorentz
manifolds a theorem of J. Simons~\cite{Simons:68}, which shows how
one can build isometric immersions with parallel mean curvature in
$\mathbb R^{n+k+1}$ from minimal immersions
$\varphi:M^n\rightarrow\mathbb S^{n+k}$.

As in the previous section, let $\overline M^{n+k+1}$ be an
$(n+k+1)-$dimensional conformally stationary Lorentz manifold, with
closed conformal vector field $V$ of conformal factor $\psi$. If
$V\neq 0$ on $\overline M$, we saw that the orthogonal distribution
$V^{\bot}$ is integrable, with totally umbilical leaves. Therefore,
if $\Xi^{n+k}$ be such a leaf, then it is an umbilical spacelike
hypersurface of $\overline M^{n+k+1}$ and
$\nu=\frac{V}{\sqrt{-\langle V,V\rangle}}$ is a global unit normal
timelike vector field on it.

Let $\varphi:M^n\rightarrow\Xi^{n+k}$ be an isometric immersion, where $M^n$ is a compact Riemannian manifold. If $\Psi$ denotes the flow of $V$, the compactness of $M^n$ guarantees the existence of $\epsilon>0$ such that $\Psi$ is defined on $(-\epsilon,\epsilon)\times\varphi(M)$, and the map
\begin{equation}\label{eq:immersion Phi}
 \begin{array}{rccc}
 \Phi:&(-\epsilon,\epsilon)\times M^n&\longrightarrow&\overline{M}^{n+k+1}\\
&(t,q)&\mapsto&\Psi(t,\varphi(q))
  \end{array}
\end{equation}
is also an immersion. Furnishing $(-\epsilon,\epsilon)\times M^n$ with the metric induced by $\Phi$, we turn it into a Lorentz manifold an $\Phi$ into an isometric immersion such that $\Phi_{|\{0\}\times M^n}=\varphi$.

Finally, letting $\text{Ric}_{\overline{M}}$ denote the field of
self-adjoint operators associated to the Ricci tensor of
$\overline{M}$, we get the above-mentioned generalization of Simons'
result (see also~\cite{Caminha:09} for the Riemannian case).

\begin{theorem}\label{thm:maximal submanifolds}
In the above notations, let $\psi\neq 0$ on $\varphi(M)$. If $\overline M$ has constant sectional curvature or $\text{\rm Ric}_{\overline M}(V)=0$, then the following are equivalent:
\begin{enumerate}
 \item[$(a)$] $\varphi$ is maximal.
 \item[$(b)$] $\Phi$ is maximal.
 \item[$(c)$] $\Phi$ has parallel mean curvature.
\end{enumerate}
\end{theorem}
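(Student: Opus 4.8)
The plan is to exploit the warped‑product structure of $\Phi$ and reduce all three conditions to a single identity for the mean curvature vector $\vec{H}$ of $\Phi$. First I would record the geometry of $\Phi$. Since the leaf $\Xi$ containing $\varphi(M)$ is connected, both $\psi$ and $\langle V,V\rangle$ are constant on it; write $\langle V,V\rangle\equiv -c$ on $\Xi$, with $c>0$. Along $\Phi$ one has $\Phi_*\partial_t=V$, and $V\langle V,V\rangle=2\psi\langle V,V\rangle$ by (\ref{eq:gradient conformal}); moreover the time‑$t$ flow $\Psi_t$ of $V$ carries $\Xi$ onto a leaf $\Xi_t$ of $V^{\bot}$, on which $\psi$ is again constant. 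Hence $\psi\circ\Phi$ and $\langle V,V\rangle\circ\Phi$ depend only on $t$: put $\psi\circ\Phi=\bar\psi(t)$ and $\langle V,V\rangle\circ\Phi=-f(t)$, so $f(0)=c$ and $f'=2\bar\psi f$. Because $\psi$ is leaf‑constant, $\int_0^t\psi(\Psi_s(p))\,ds$ is independent of $p\in\Xi$, so $\Psi_t\colon\Xi\to\Xi_t$ is a homothety of constant ratio $\rho(t)$ with $\rho(t)^2=f(t)/c$; consequently the metric induced on $(-\epsilon,\epsilon)\times M$ is the GRW‑type metric $-f(t)\,dt^2+\rho(t)^2\,g_M$, where $g_M$ is the metric of $M$.

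Next I would compute the second fundamental form $\alpha_\Phi$ of $\Phi$. Applying $\overline\nabla_XV=\psi X$ (that is, (\ref{eq:closed conformal})) to $X=V$ and to $X=\Phi_*Z$ for $Z\in\mathfrak X(M)$ Lie‑dragged by $V$, the derivatives $\overline\nabla_{\Phi_*\partial_t}(\Phi_*\partial_t)$ and $\overline\nabla_{\Phi_*\partial_t}(\Phi_*Z)$ are both tangent to $\Phi$, so $\alpha_\Phi(\partial_t,\,\cdot\,)=0$. For $X,Y\in\mathfrak X(M)$, the normal part in $\overline M$ of $\overline\nabla_{\Phi_*X}(\Phi_*Y)$ is unaffected by the umbilicity term of $\Xi_t\subset\overline M$ (a multiple of $\nu\parallel V$, hence tangent to $\Phi$), so it equals the second fundamental form of $\Phi(\{t\}\times M)$ inside $\Xi_t$; since $\Psi_t$ is a homothety it preserves Levi‑Civita connections and carries the normal bundle of $\varphi(M)$ in $\Xi$ onto that of $\Phi(\{t\}\times M)$ in $\Xi_t$, which is precisely the normal bundle of $\Phi$. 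Thus $\alpha_\Phi(X,Y)=(\Psi_t)_*\alpha_\varphi(X,Y)$, and tracing against the induced metric gives $\vec{H}_\Phi(t,q)=\dfrac{c}{f(t)}\,(\Psi_t)_*\vec{H}_\varphi(q)$. As $(\Psi_t)_*$ is a linear isomorphism and $c/f(t)>0$, this already yields $(a)\Leftrightarrow(b)$.

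Since $(b)\Rightarrow(c)$ is immediate, what remains is $(c)\Rightarrow(b)$. Here I would differentiate $\vec{H}_\Phi$ along $\partial_t$. The vector field $t\mapsto(\Psi_t)_*\vec{H}_\varphi(q)$ is Lie‑dragged by $V$, so $\overline\nabla_V\big((\Psi_t)_*\vec{H}_\varphi\big)=\overline\nabla_{(\Psi_t)_*\vec{H}_\varphi}V=\psi\,(\Psi_t)_*\vec{H}_\varphi$; combining this with $f'=2\bar\psi f$ gives $\overline\nabla_{\Phi_*\partial_t}\vec{H}_\Phi=-\bar\psi(t)\,\vec{H}_\Phi$. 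As $\vec{H}_\Phi$ is a spacelike normal field, the right‑hand side is already normal, so $\nabla^{\bot}_{\partial_t}\vec{H}_\Phi=-\bar\psi(t)\vec{H}_\Phi$. If $\Phi$ has parallel mean curvature this forces $\bar\psi(t)\vec{H}_\Phi\equiv0$; evaluating at $t=0$, where $\bar\psi(0)=\psi|_{\varphi(M)}\neq0$ by hypothesis, we obtain $\vec{H}_\varphi=0$, so $\varphi$ is maximal and, by the previous paragraph, so is $\Phi$.

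The step I expect to be the main obstacle is the identity $\alpha_\Phi(X,Y)=(\Psi_t)_*\alpha_\varphi(X,Y)$: one must verify carefully that $d\Psi_t|_\Xi$ has \emph{constant} conformal ratio, that a homothety transports both the Levi‑Civita connection and the normal bundle of a submanifold, and that the normal bundle of $\Phi$ in $\overline M$ coincides with the normal bundle of $\Phi(\{t\}\times M)$ in $\Xi_t$ (this is where one uses that the umbilicity term of the leaf points along $V$). The hypothesis $\psi\neq0$ on $\varphi(M)$ is essential — it is exactly what makes $(c)\Rightarrow(b)$ go through. The ambient curvature hypothesis, $\overline M$ of constant sectional curvature or $\mathrm{Ric}_{\overline M}(V)=0$, is what one would invoke if, instead of the structural identity above, one computed $\nabla^{\bot}\vec{H}_\Phi$ directly from the Codazzi equation of $\Phi$, where a curvature term of the form $\big(\overline R(V,\,\cdot\,)\,\cdot\,\big)^{\bot}$ appears and must be made to vanish.
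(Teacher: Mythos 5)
Your proof is correct, and it reaches the paper's key formula by a genuinely different route. The paper fixes a frame $\{e_i,\eta_\beta\}$ adapted to $\varphi$, parallel-transports it along the integral curves of $V$, and solves Cauchy problems for the components $\langle\overline\nabla_{E_i}E_i,E_k\rangle$ and $\langle\overline\nabla_{E_i}E_i,N_\beta\rangle$; the curvature hypothesis is invoked precisely to discard the term $\langle\overline R(V,E_i)E_i,\,\cdot\,\rangle$ that appears when the $t$-derivative is commuted past $\overline\nabla_{E_i}$. You instead observe that $\Psi_t$ restricted to the leaf $\Xi$ is a homothety onto a piece of the leaf $\Xi_t$ (because $\tfrac{d}{dt}\Psi_t^*\overline g=2(\psi\circ\Psi_t)\,\Psi_t^*\overline g$ and $\psi$ is constant on connected leaves), that homotheties transport Levi-Civita connections, normal bundles and second fundamental forms, and that the umbilicity term of $\Xi_t\subset\overline M$ points along $V$ and hence is tangent to $\Phi$; this yields $\overline H_\Phi=\tfrac{c}{f(t)}(\Psi_t)_*\overline H_\varphi$ directly, which is the paper's formula (\ref{eq:auxiliar 9 para cone com vetor curvatura media paralelo}) up to the harmless normalization $\tfrac{n}{n+1}$, and the three equivalences then follow exactly as in the paper. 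What your route buys is significant: it makes no use of the hypothesis that $\overline M$ has constant sectional curvature or ${\rm Ric}_{\overline M}(V)=0$, and this is not an oversight on your part. Indeed, for a closed conformal field one has $\overline R(X,Y)V=X(\psi)Y-Y(\psi)X$, so the pair symmetry of the curvature tensor gives
$$\langle\overline R(V,E_i)E_i,Z\rangle=\langle\overline R(E_i,Z)V,E_i\rangle=E_i(\psi)\langle Z,E_i\rangle-Z(\psi)\langle E_i,E_i\rangle=0$$
whenever $E_i,Z\perp V$, since $\overline\nabla\psi$ is parallel to $V$ by (\ref{eq:gradient conformal factor}); hence the curvature term in the paper's own computation vanishes automatically and the theorem holds for any closed conformal timelike $V$. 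The only points worth tightening in your write-up are (i) the claim that $\int_0^t\psi(\Psi_s(p))\,ds$ is independent of $p\in\Xi$, which requires each $\Psi_s(\Xi)$ to lie in a single connected leaf — true because the flow of $V$ preserves the distribution $V^{\bot}$ and $\Xi$ is connected — and (ii) the meaning of $\overline\nabla_V$ applied to $t\mapsto(\Psi_t)_*\overline H_\varphi$, which should be phrased via a flow-invariant extension so that $[V,\,\cdot\,]=0$ can be invoked; both are routine.
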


\begin{proof}
Fix $p\in M$ and, on a neighborhood $\Omega$ of $p$ in $M$, an orthonormal frame $\{e_1,\ldots,e_n,\eta_1,\ldots,\eta_k\}$ adapted to $\varphi$, such that $\{e_1,\ldots,e_n\}$ is geodesic at $p$.

If $E_1,\ldots,E_n,N_1,\ldots,N_k$ are the vector fields on $\Psi((-\epsilon,\epsilon)\times\Omega)$ obtained from the $e_i$'s and $\eta_{\beta}$'s by parallel transport along the integral curves of $V$ that intersect $\Omega$, it follows that $\{E_1,\ldots,E_n,\nu,N_1,\ldots,N_k\}$ is an orthonormal frame on $\Psi((-\epsilon,\epsilon)\times\Omega)$, adapted to the immersion (\ref{eq:immersion Phi}).

Let $\overline\nabla$ be the Levi-Civita connection of $\overline M$ and $\overline H$ the mean curvature vector of $\Phi$. It follows from the closed conformal character of $V$ that, on $\Phi((-\epsilon,\epsilon)\times\Omega)$,
\begin{equation}\label{eq:auxiliar 0 para cone com vetor curvatura media paralelo}
\overline H=\frac{1}{n+1}(\overline\nabla_{E_i}E_i+\overline\nabla_{\nu}\nu)^{\bot}
=\frac{1}{n+1}(\tilde\nabla_{E_i}E_i)^{\bot},
\end{equation}
where $\bot$ denotes orthogonal projection on $T\Phi((-\epsilon,\epsilon)\times\Omega)^{\bot}$.

In order to compute $\overline\nabla_{E_i}E_i$ along the integral curve that passes through $p$, note that
\begin{equation}\label{eq:auxiliar 2 para cone com vetor curvatura media paralelo}
 \langle\overline\nabla_{E_i}E_i,V\rangle=-\langle E_i,\overline\nabla_{E_i}V\rangle=-n\psi.
\end{equation}

Now, if $\overline R$ stands for the curvature operator of $\overline M$, observe that
\begin{equation}\label{eq:auxiliar 3 para cone com vetor curvatura media paralelo}
 \begin{split}
 \frac{d}{dt}\langle\overline\nabla_{E_i}E_i,E_k\rangle&\,=\langle\overline\nabla_V\overline\nabla_{E_i}E_i,E_k\rangle\\
&\,=\langle\overline R(V,E_i)E_i,E_k\rangle+\langle\overline\nabla_{E_i}\overline\nabla_VE_i,E_k\rangle+\langle\overline\nabla_{[V,E_i]}E_i,E_k\rangle\\
&\,=\langle\text{Ric}_{\overline{M}}(V),E_k\rangle-\langle\overline\nabla_{\overline\nabla_{E_i}V}E_i,E_k\rangle\\
&\,=-\psi\langle\overline\nabla_{E_i}E_i,E_k\rangle.
 \end{split}
\end{equation}
Note that, in the last equality, we used the fact that either $\overline M$ has constant sectional curvature or $\text{Ric}_{\overline M}(V)=0$ to conclude that $\langle\text{Ric}_{\overline M}(V),E_k\rangle=0$.

Let $D$ and $\nabla$ respectively denote the Levi-Civita connections of $\Xi^{n+k}$ and $M^n$. Since $\{e_1,\ldots,e_n\}$ is geodesic at $p$ (on $M$), it follows that
\begin{equation}\label{eq:auxiliar 4 para cone com vetor curvatura media paralelo}
 \langle\overline\nabla_{E_i}E_i,E_k\rangle_p=\langle D_{e_i}e_i,e_k\rangle_p=\langle(D_{e_i}e_i)^{\bot}+\nabla_{e_i}e_i,e_k\rangle_p=0.
\end{equation}
Therefore, solving the Cauchy problem formed by (\ref{eq:auxiliar 3 para cone com vetor curvatura media paralelo}) and (\ref{eq:auxiliar 4 para cone com vetor curvatura media paralelo}), we get
\begin{equation}\label{eq:auxiliar 5 para cone com vetor curvatura media paralelo}
\langle\overline\nabla_{E_i}E_i,E_k\rangle_{\Psi(t,p)}=0,\,\,\forall\,\,|t|<\epsilon.
\end{equation}

Analogously to (\ref{eq:auxiliar 3 para cone com vetor curvatura media paralelo}), we get
\begin{equation}\label{eq:auxiliar 6 para cone com vetor curvatura media paralelo}
  \frac{d}{dt}\langle\overline\nabla_{E_i}E_i,N_{\beta}\rangle=-\psi\langle\overline\nabla_{E_i}E_i,N_{\beta}\rangle.
\end{equation}
On the other hand, letting $A_{\beta}:T_pM\rightarrow T_pM$ denote the shape operator of $\varphi$ in the direction of $\eta_{\beta}$ and writing $A_{\beta}e_i=h^{\beta}_{ij}e_j$, we have
\begin{equation}\label{eq:auxiliar 7 para cone com vetor curvatura media paralelo}
 \langle\overline\nabla_{E_i}E_i,N_{\beta}\rangle_p=\langle D_{e_i}e_i,\eta_{\beta}\rangle_p=\langle A_{\beta}e_i,e_i\rangle_p=h^{\beta}_{ii}.
\end{equation}
Solving the Cauchy problem formed by (\ref{eq:auxiliar 6 para cone com vetor curvatura media paralelo}) and (\ref{eq:auxiliar 7 para cone com vetor curvatura media paralelo}), we get
\begin{equation}\label{eq:auxiliar 8 para cone com vetor curvatura media paralelo}
 \langle\overline\nabla_{E_i}E_i,N_{\beta}\rangle_{\Psi(t,p)}=h^{\beta}_{ii}\exp\left(-\int_0^t\psi(s)ds\right).
\end{equation}

It finally follows from (\ref{eq:auxiliar 2 para cone com vetor curvatura media paralelo}), (\ref{eq:auxiliar 5 para cone com vetor curvatura media paralelo}) and (\ref{eq:auxiliar 8 para cone com vetor curvatura media paralelo}) that, at the point $(t,p)$,
\begin{eqnarray*}
 \overline\nabla_{E_i}E_i&=&\langle\overline\nabla_{E_i}E_i,E_k\rangle E_k-\langle\overline\nabla_{E_i}E_i,\nu\rangle\nu+\langle\overline\nabla_{E_i}E_i,N_{\beta}\rangle N_{\beta}\\
&=&\frac{n\psi}{\langle V,V\rangle}V+\exp\left(-\int_0^t\psi(s)ds\right)h^{\beta}_{ii}N_{\beta}.
\end{eqnarray*}
Therefore, (\ref{eq:auxiliar 0 para cone com vetor curvatura media paralelo}) gives us
\begin{equation}\label{eq:auxiliar 9 para cone com vetor curvatura media paralelo}
 \overline H=\frac{1}{n+1}\exp\left(-\int_0^t\psi(s)ds\right)h^{\beta}_{ii}N_{\beta}.
\end{equation}

Let us finally establish the equivalence of (a), (b) and (c), observing that (b) $\Rightarrow$ (c) is always true.\\

\noindent (a) $\Rightarrow$ (b): if $\varphi$ is maximal, we have $h^{\beta}_{ii}=0$ for all $1\leq\beta\leq k$, and it follows from (\ref{eq:auxiliar 9 para cone com vetor curvatura media paralelo}) that $\overline{H}=0$.\\

\noindent (c) $\Rightarrow$ (a): if $\overline\nabla^{\bot}\overline H=0$, then, along the integral curve of $V$ that passes through $p$, the parallelism of the $N_{\beta}$ gives
$$0=\overline\nabla_V^{\bot}\overline H=\left(\frac{D\overline H}{dt}\right)^{\bot}=-\frac{\psi(t)}{n+1}\exp\left(-\int_0^t\psi(s)ds\right)h^{\beta}_{ii}N_{\beta}.$$
However, since $\psi\neq 0$ on $\varphi(M)$, it follows from the
above equality that $h^{\beta}_{ii}=0$ at $p$ for all
$1\leq\beta\leq k$, so that $\varphi$ is maximal at $p$.
\end{proof}

The following corollaries are immediate.

\begin{corollary}\label{coro:maximal submanifold in GRW I}
Let $I\subset(0,+\infty)$ and $t_0\in I$. In the GRW space $-I\times_t\mathbb F^{n+k}$, if $\varphi:M^n\rightarrow\{t_0\}\times F^{n+k}$ is an isometric immersion and $\Phi$ is the canonical immersion of $-I\times_tM^n$ into $-I\times_tF^{n+k}$, then $\varphi$ is maximal if and only if $\Phi$ is maximal.
\end{corollary}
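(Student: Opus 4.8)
The plan is to deduce Corollary~\ref{coro:maximal submanifold in GRW I} directly from Theorem~\ref{thm:maximal submanifolds} by taking, on the GRW $\overline M=-I\times_t F^{n+k}$, the closed conformal timelike vector field $V=(\phi\circ\pi_I)\partial_t$ with $\phi(t)=t$, which is nowhere zero since $I\subset(0,+\infty)$. As observed in the previous section, this $V$ is closed, timelike and conformal with conformal factor $\psi=\phi'\circ\pi_I\equiv 1$. To invoke Theorem~\ref{thm:maximal submanifolds} two things must be verified. First, $\psi\equiv 1$ is nowhere zero, so in particular $\psi\neq 0$ on $\varphi(M)$. Second, $\mathrm{Ric}_{\overline M}(V)=0$: since $\psi$ is constant, $V$ is homothetic, and differentiating (\ref{eq:closed conformal}) gives $\overline R(X,Y)V=X(\psi)Y-Y(\psi)X=0$ for all $X,Y\in\mathfrak X(\overline M)$; tracing this identity yields $\mathrm{Ric}_{\overline M}(V)=0$. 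Note that the constant sectional curvature alternative in Theorem~\ref{thm:maximal submanifolds} is of no help here, since by (\ref{eq:ode for constant sectional curvature}) the GRW $-I\times_t F^{n+k}$ has constant sectional curvature only when $F$ has constant curvature $-1$, so it is essential to use the hypothesis on the Ricci curvature of $V$.

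Next I would identify the data of Theorem~\ref{thm:maximal submanifolds} with those of the corollary. The leaves of the orthogonal distribution $V^{\bot}$ are exactly the slices $\{s\}\times F^{n+k}$; hence $\Xi^{n+k}=\{t_0\}\times F^{n+k}$ is one such leaf, and $\varphi:M^n\to\{t_0\}\times F^{n+k}$ is precisely an isometric immersion of a compact Riemannian manifold into a leaf, as required. Writing $\bar\varphi=\pi_F\circ\varphi:M^n\to F^{n+k}$, the integral curves of $V=t\partial_t$ through the leaf $\{t_0\}\times F$ are $s\mapsto(t_0e^{s},\cdot)$, so the immersion $\Phi$ produced by (\ref{eq:immersion Phi}) satisfies $\Phi(s,q)=(t_0e^{s},\bar\varphi(q))$; its image therefore coincides with that of the canonical immersion $(t,q)\mapsto(t,\bar\varphi(q))$ of $-I\times_t M^n$ into $-I\times_t F^{n+k}$, the two differing only by the reparametrization $t=t_0e^{s}$ of the base interval, which does not affect whether the immersed submanifold is maximal. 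Theorem~\ref{thm:maximal submanifolds}, in the form $(a)\Leftrightarrow(b)$, then gives the asserted equivalence near the slice $\{t_0\}$; applying the same reasoning at an arbitrary slice $\{t_1\}\times F$, and using that $\varphi$ is maximal if and only if $\bar\varphi$ is a minimal immersion into $F$ (a scale-invariant condition), upgrades it to the equivalence on all of $-I\times_t M^n$.

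The only genuinely fussy point in this scheme is the bookkeeping in the last step: matching the immersion $\Phi$ of the corollary with the flow-built immersion of Theorem~\ref{thm:maximal submanifolds} up to reparametrization of $I$, and passing from a neighborhood of a single slice to the whole warped product. Everything else is an immediate specialization of Theorem~\ref{thm:maximal submanifolds}, and the only computation needed beyond citing that theorem is the one-line verification that $\mathrm{Ric}_{\overline M}(V)=0$.
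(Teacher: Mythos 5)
Your proof is correct and takes essentially the same route as the paper: both reduce the corollary to Theorem~\ref{thm:maximal submanifolds} applied to the closed conformal field $V=t\partial_t$ (with $\psi\equiv 1\neq 0$) via the hypothesis $\mathrm{Ric}_{\overline M}(V)=0$. The only difference is that the paper verifies $\mathrm{Ric}_{\overline M}(V)=0$ by citing Corollary~7.43 of O'Neill, whereas you derive it from the identity $\overline R(X,Y)V=X(\psi)Y-Y(\psi)X$ for closed conformal fields with constant $\psi$ — a more self-contained justification — and you additionally make explicit the reparametrization identifying the corollary's canonical immersion with the flow-built immersion of (\ref{eq:immersion Phi}), a bookkeeping step the paper leaves implicit.
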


\begin{proof}
 Since $V=t\partial_t$, Corollary $7.43$ of~\cite{ONeill:83}, $\text{Ric}_{\overline M}(V)=0$ in this case.
\end{proof}

\begin{corollary}\label{coro:maximal submanifold in the anti-de Sitter space}
Let $\varphi:M^n\rightarrow\mathbb S^{n+k}$ be an $n-$dimensional
submanifold of some round sphere $\mathbb S^{n+k}$ of the
$(n+k+1)-$dimensional de Sitter space $\mathbb S_1^{n+k+1}$. If
$\varphi(M)$ is contained in the chronological past {\rm(}resp.
future{\rm)} of $\mathbb S_1^{n+k+1}$, then $M^n$ is maximal in
$\mathbb S^{n+k}$ if and only if the union of the segments of the
integral curves of $\partial_t$ contained in the chronological past
{\rm(}resp. future{\rm)} of $\mathbb S_1^{n+k+1}$ and passing
through points of $\varphi(M)$ is maximal in $\mathbb S_1^{n+k+1}$.
\end{corollary}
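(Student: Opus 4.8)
The plan is to read this off from Theorem~\ref{thm:maximal submanifolds} applied to the ambient $\overline M^{n+k+1}=\mathbb S_1^{n+k+1}$. Recall from the Example of Section~2 describing the de Sitter space as a GRW space that $\mathbb S_1^{n+k+1}\simeq-\mathbb R\times_{\cosh t}\mathbb S^{n+k}$ has constant sectional curvature $1$ and carries the timelike closed conformal field $V=(\sinh t)\partial_t$, of conformal factor $\psi=\cosh t$, whose integral curves are time-reparametrizations of those of $\partial_t$. The round spheres of $\mathbb S_1^{n+k+1}$ are the slices $\{t_0\}\times\mathbb S^{n+k}$, and these are exactly the totally umbilical leaves of $V^{\bot}$ away from the equator $\{0\}\times\mathbb S^{n+k}$, where $V$ vanishes. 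Thus, if $\varphi(M)$ lies in the chronological past (resp. future), it is contained in a slice $\Xi=\{t_0\}\times\mathbb S^{n+k}$ with $t_0<0$ (resp. $t_0>0$); in particular $V$ is nonsingular along $\Xi$, so $\Xi$ is a genuine totally umbilical leaf of $V^{\bot}$ with timelike unit normal $\nu=V/\sqrt{-\langle V,V\rangle}$, and $\psi=\cosh t_0\neq 0$ on $\varphi(M)$.

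Granting this, the hypotheses of Theorem~\ref{thm:maximal submanifolds} are satisfied for the immersion $\varphi:M^n\rightarrow\Xi^{n+k}$, so it remains only to identify the immersion $\Phi$ of~\eqref{eq:immersion Phi} — obtained by flowing $\varphi(M)$ along $V$ — with the union of segments appearing in the statement. Since $\sinh t<0$ for $t<0$ and $t=0$ is a fixed point of $\dot t=\sinh t$, the maximal integral curve of $V$ issuing from a point $(t_0,p)$ with $t_0<0$ has image exactly $\{(t,p):-\infty<t<0\}$, that is, the segment of the $\partial_t$-integral curve through $(t_0,p)$ lying in the chronological past; symmetrically for the future. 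Running the computation of Theorem~\ref{thm:maximal submanifolds} along the full $V$-orbits — legitimate here, since those orbits remain in $t<0$ (resp. $t>0$) and so the flow extends to all parameter values — therefore produces an immersion whose image is precisely the union $\mathcal C$ of the one-sided segments passing through the points of $\varphi(M)$.

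Finally I would invoke the mean-curvature identity~\eqref{eq:auxiliar 9 para cone com vetor curvatura media paralelo}, which along each $V$-orbit gives, at the point $(t,p)$, $\overline H=\frac{1}{n+1}\exp\big(-\int_0^t\psi(s)\,ds\big)\,h^{\beta}_{ii}N_{\beta}$. As the exponential factor is everywhere positive, $\mathcal C$ is maximal at $(t,p)$ if and only if $h^{\beta}_{ii}=0$ for every $\beta$ at $p$, i.e. if and only if $\varphi$ is maximal at $p$; since every point of $\mathcal C$ is of the form $(t,p)$ with $p$ ranging over $\varphi(M)$, this yields the asserted equivalence between maximality of $M^n$ in $\mathbb S^{n+k}$ and maximality of $\mathcal C$ in $\mathbb S_1^{n+k+1}$. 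The only point demanding real care is the identification of the $V$-orbits with the prescribed one-sided segments, and this is exactly where the hypothesis that $\varphi(M)$ lies strictly in the past (resp. future), hence off the equator, is used: it keeps $V$ nonsingular along $\Xi$, so that Theorem~\ref{thm:maximal submanifolds} may be invoked, and it forces the flow of $V$ to sweep out precisely the one-sided segments of the $\partial_t$-integral curves.
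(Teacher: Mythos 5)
Your overall strategy---reading the corollary off from Theorem~\ref{thm:maximal submanifolds} and the pointwise mean--curvature identity (\ref{eq:auxiliar 9 para cone com vetor curvatura media paralelo}) in the GRW model $\mathbb S_1^{n+k+1}\simeq-\mathbb R\times_{\cosh t}\mathbb S^{n+k}$---is the right one, but the proof is built on a misidentification of the closed conformal field, and one of your key steps fails once this is corrected. The field $(\sinh t)\partial_t$ is \emph{not} closed conformal on $-\mathbb R\times_{\cosh t}\mathbb S^{n+k}$: by the paper's general statement for GRW spaces, the closed conformal field is $V=(\phi\circ\pi_I)\partial_t$ with conformal factor $\phi'\circ\pi_I$, and a field $f(t)\partial_t$ is closed conformal only when $f$ is a constant multiple of the warping function ($f'=f\phi'/\phi$). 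Hence the correct field is $V=(\cosh t)\partial_t$ with $\psi=\sinh t$; the Example in Section~2 contains a typo (it records the conformal factor in place of the warping function), and your assignment $\psi=\cosh t$ compounds it. This matters in two places. First, the hypothesis of Theorem~\ref{thm:maximal submanifolds} that forces $\varphi(M)$ off the equator is $\psi=\sinh t_0\neq 0$, not the nonvanishing of $V$: the field $(\cosh t)\partial_t$ never vanishes, and the slices are leaves of $V^{\bot}$ even at $t=0$. Second, and more seriously, your identification of the $V$-orbits with the one-sided segments of the $\partial_t$-curves rests entirely on $t=0$ being a fixed point of $\dot t=\sinh t$; for the correct field the orbit equation is $\dot t=\cosh t\geq 1$, the orbits cross the equator, and the flow-out of $\varphi(M)$ along $V$ is the union of the \emph{full} $\partial_t$-lines, not the one-sided segments named in the statement. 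As written, then, you invoke the theorem for a vector field to which it does not apply, and you justify the identification of the image of $\Phi$ with the stated union of segments by a dynamical fact that is false for the actual conformal field.

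The conclusion is still reachable along your lines, because the final step uses only qualitative features of (\ref{eq:auxiliar 9 para cone com vetor curvatura media paralelo}): maximality is pointwise, and at any point $(t_1,q)$ of the union $\mathcal C$ of one-sided segments one may rerun the computation of Theorem~\ref{thm:maximal submanifolds} with base slice $\{t_1\}\times\mathbb S^{n+k}$ (whose induced immersion is homothetic to $\varphi$, hence maximal exactly when $\varphi$ is), or simply restrict the flow parameter so as to stay in the chronological past (resp.\ future). One then gets $\overline H=\frac{1}{n+1}\exp\bigl(-\int\psi\bigr)h^{\beta}_{ii}N_{\beta}$ at every point of $\mathcal C$, with a positive exponential factor, and the asserted equivalence follows. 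But this repair must replace both the choice of $V,\psi$ and your description of the $V$-orbits.
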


\begin{corollary}\label{coro:maximal submanifold in de Sitter space}
Let $\varphi:M^n\rightarrow\mathbb H^{n+k}$ be an $n-$dimensional
submanifold of some hyperbolic space $\mathbb H^{n+k}$ of the
$(n+k+1)-$dimensional anti-de Sitter space $\mathbb H_1^{n+k+1}$.
Then $M^n$ is maximal in $\mathbb H^{n+k}$ if and only if the union
of the segments of the integral curves of $\partial_t$ that pass
through points of $\varphi(M)$ is maximal in $\mathbb H_1^{n+k+1}$.
\end{corollary}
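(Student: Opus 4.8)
The plan is to recognize this statement as a direct specialization of Theorem~\ref{thm:maximal submanifolds} to the anti-de Sitter space, so the work consists in identifying the objects of the general theorem inside the concrete model $\mathbb H_1^{n+k+1}\simeq -(0,\pi)\times_{\sin t}\mathbb H^{n+k}$. First I would set $\overline M^{n+k+1}=\mathbb H_1^{n+k+1}$ and recall from the earlier examples that $V=(\sin t)\partial_t$ is a timelike closed conformal vector field on $\mathbb H_1^{n+k+1}$ with conformal factor $\psi=\cos t$. The leaves of the orthogonal distribution $V^{\bot}$ are exactly the slices $\{t_0\}\times \mathbb H^{n+k}$, each isometric to a copy of hyperbolic space, and one such leaf plays the role of $\Xi^{n+k}$ in the theorem; an $n$-dimensional submanifold $\varphi:M^n\to\mathbb H^{n+k}$ is thus an isometric immersion into a leaf, as required. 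The immersion $\Phi$ built in~(\ref{eq:immersion Phi}) from the flow of $V$ is precisely the union of the segments of integral curves of $\partial_t$ through the points of $\varphi(M)$, since the flow lines of $V$ and of $\partial_t$ have the same images (they differ only by a reparametrization, $V$ being a positive multiple of $\partial_t$).

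The second step is to check the hypotheses of Theorem~\ref{thm:maximal submanifolds}. The ambient $\mathbb H_1^{n+k+1}$ has constant sectional curvature $-1$, so the curvature hypothesis ``$\overline M$ has constant sectional curvature'' is automatically satisfied and there is no need to verify $\mathrm{Ric}_{\overline M}(V)=0$. The condition $\psi\neq 0$ on $\varphi(M)$ is the one point that genuinely requires attention: here $\psi=\cos t$, which vanishes exactly on the slice $\{\pi/2\}\times\mathbb H^{n+k}$. I would handle this by observing that $\varphi(M)$ lies in a single slice $\{t_0\}\times\mathbb H^{n+k}$, and that we may take $t_0\neq \pi/2$ — indeed, the totally geodesic slice is just the standard copy of $\mathbb H^{n+k}$ sitting inside $\mathbb H_1^{n+k+1}$, but every slice is isometric to $\mathbb H^{n+k}$, so ``some hyperbolic space $\mathbb H^{n+k}$ of $\mathbb H_1^{n+k+1}$'' in the statement should be read as allowing any such slice, on which $\psi=\cos t_0\neq 0$. (If one insists on the specific slice $t_0=\pi/2$, a small limiting argument or a direct computation via~(\ref{eq:auxiliar 9 para cone com vetor curvatura media paralelo}) still gives the conclusion, since on that slice $\psi$ vanishes only along the leaf itself, not along the flow lines, so the Cauchy problems~(\ref{eq:auxiliar 6 para cone com vetor curvatura media paralelo})–(\ref{eq:auxiliar 8 para cone com vetor curvatura media paralelo}) still force $h^{\beta}_{ii}=0$ as soon as $\overline H$ is parallel at one nearby point.)

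Having matched all the data, the conclusion is immediate: by the equivalence (a) $\Leftrightarrow$ (b) in Theorem~\ref{thm:maximal submanifolds}, $\varphi$ is maximal in the slice $\mathbb H^{n+k}$ if and only if $\Phi$ — that is, the union of the integral-curve segments of $\partial_t$ through $\varphi(M)$ — is maximal in $\mathbb H_1^{n+k+1}$. I expect the only real obstacle to be the bookkeeping around the zero set of $\psi=\cos t$ and the precise meaning of ``a hyperbolic space $\mathbb H^{n+k}$ of $\mathbb H_1^{n+k+1}$''; everything else is a transcription of the general theorem into the warped-product model, using that $\mathbb H_1^{n+k+1}$ is a GRW of constant curvature $-1$ with warping function $\sin t$ as recorded in the earlier example. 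Note also that, unlike Corollary~\ref{coro:maximal submanifold in the anti-de Sitter space} for the de Sitter case, no restriction of $\varphi(M)$ to a ``past'' or ``future'' region is needed here, because on $(0,\pi)$ the warping function $\sin t$ is positive throughout and the only failure of $\psi\neq 0$ is the single slice $t_0=\pi/2$.
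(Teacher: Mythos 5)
Your proposal is correct and follows exactly the route the paper intends: the corollary is stated there as an immediate specialization of Theorem~\ref{thm:maximal submanifolds} to $\mathbb H_1^{n+k+1}\simeq-(0,\pi)\times_{\sin t}\mathbb H^{n+k}$ with $V=(\sin t)\partial_t$ and $\psi=\cos t$, with no further argument given. Your extra care about the slice $t_0=\pi/2$ where $\psi$ vanishes is a genuine point the paper glosses over, and your resolution is right: the equivalence $(a)\Leftrightarrow(b)$ follows from~(\ref{eq:auxiliar 9 para cone com vetor curvatura media paralelo}) alone, whose exponential factor never vanishes, so the hypothesis $\psi\neq 0$ is only needed for $(c)\Rightarrow(a)$, which the corollary does not use.
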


\section{Bernstein-type Theorems}

We continue to employ the notations of the previous sections, i.e., $\overline M$ is conformally stationary with closed conformal vector field $V$. However, we let $\psi_V$ be the conformal factor of $V$.

From now on, we let $x:M^n\rightarrow\overline M^{n+1}$ be a connected, complete, oriented spacelike hypersurface, $N$ be a unit normal vector field which orients $M$ and has the same time-orientation of $V$, and $A$ and $H$ be respectively the shape operator and the mean curvature of $M$ with respect to $N$.

If $f_V:M\rightarrow\mathbb R$ is given by $f_V=\langle V,N\rangle$ then $f_V$ is negative on $M$. On the other hand, standard computations (cf.~\cite{BBC:08}) give
\begin{equation}\label{eq:Gradient of f nu}
 \nabla f_V=-A(V^{\top})
\end{equation}
and
\begin{equation}\label{eq:Laplacian of f nu}
\Delta f_V=nV^{\top}(H)+\left\{\text{\rm Ric}_{\overline M}(N,N)+|A|^2\right\}f_V+n\left\{H\psi+N(\psi)\right\},
\end{equation}
where $(\,\,)^{\top}$ stands for orthogonal projection onto $M$.

If $W$ is another closed conformal vector field on $\overline M$, with conformal factor $\psi_W$, and $g:M\rightarrow\mathbb R$ is given by $g=\langle V,W\rangle$, then another standard computation gives
\begin{equation}\label{eq:Gradient of g}
\nabla g=\psi_VW^{\top}+\psi_WV^{\top}.
\end{equation}
and
\begin{equation}\label{eq:Laplacian of nu inner eta}
 \Delta g=W^{\top}(\psi_V)+V^{\top}(\psi_W)+nH(\psi_Vf_W+\psi_Wf_V)+2n\psi_V\psi_W.
\end{equation}

We are now in position to state and prove the following Bernstein-type general theorem for spacelike hypersurfaces. Observe that we do not require the hypersurface in question to be of constant mean curvature. In what follows, we let $\mathcal L^1(M)$ be the space of Lebesgue integrable functions on $M$.

\begin{theorem}\label{thm:Bernstein-type}
Let $\overline M^{n+1}$ have nonnegative Ricci curvature, $V$ and
$W$ be respectively a parallel and a homothetic nonparallel vector
field on $\overline M^{n+1}$, and
$x:M^n\rightarrow\overline{M}^{n+1}$ be as above. If $|A|$ is
bounded, $|V^{\top}|$ is integrable and $H$ doesn't change sign on
$M$, then:
\begin{enumerate}
\item[$(a)$] $M$ is totally geodesic and the Ricci curvature of $\overline M$ in the direction of $N$ vanishes identically.
\item[$(b)$] If $M$ is noncompact and ${\rm Ric}_M$ is also nonnegative, then $x(M)$ is contained in a leaf of $V^{\bot}$.
\end{enumerate}
\end{theorem}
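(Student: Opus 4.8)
The plan is to exploit the two formulas \eqref{eq:Gradient of g} and \eqref{eq:Laplacian of nu inner eta} applied to a cleverly chosen pair of closed conformal fields, reducing everything to a Liouville-type argument for subharmonic functions with integrable gradient. First I would specialize the function $g=\langle V,W\rangle$: since $V$ is parallel we have $\psi_V\equiv 0$, and since $W$ is homothetic $\psi_W$ is a nonzero constant $c$. Then \eqref{eq:Gradient of g} collapses to $\nabla g = c\,V^{\top}$, so that $|\nabla g|$ is integrable by hypothesis, while \eqref{eq:Laplacian of nu inner eta} collapses to
\begin{equation*}
\Delta g = V^{\top}(\psi_W) + nH\bigl(\psi_V f_W + \psi_W f_V\bigr) + 2n\psi_V\psi_W = nc\,H f_V,
\end{equation*}
using $\psi_W\equiv c$ constant (so $V^{\top}(\psi_W)=0$) and $\psi_V\equiv 0$. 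Since $f_V<0$ on $M$ and $H$ does not change sign, $\Delta g$ has a constant sign on $M$, i.e.\ $g$ is either subharmonic or superharmonic. The key analytic input is then a result of the Caminha–Yau type: a smooth function on a complete Riemannian manifold whose Laplacian has a fixed sign and whose gradient is in $\mathcal L^1(M)$ must in fact be harmonic, so $\Delta g \equiv 0$; combined with $nc f_V \neq 0$ this forces $H\equiv 0$, i.e.\ $M$ is maximal.

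Once $H\equiv 0$, I would turn to $f_V=\langle V,N\rangle$ and its Laplacian \eqref{eq:Laplacian of f nu}. With $V$ parallel, $\psi\equiv 0$, so all the terms $nV^{\top}(H)$, $nH\psi$, $nN(\psi)$ vanish (the first because $H\equiv 0$), leaving
\begin{equation*}
\Delta f_V = \bigl\{\mathrm{Ric}_{\overline M}(N,N) + |A|^2\bigr\} f_V.
\end{equation*}
Here $\mathrm{Ric}_{\overline M}(N,N)\ge 0$ by hypothesis, $|A|^2\ge 0$, and $f_V<0$, so $\Delta f_V \le 0$: $f_V$ is superharmonic. Moreover $\nabla f_V = -A(V^{\top})$ by \eqref{eq:Gradient of f nu}, and since $|A|$ is bounded and $|V^{\top}|\in\mathcal L^1(M)$, we get $|\nabla f_V|\in\mathcal L^1(M)$. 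Applying the same Liouville-type result again yields $\Delta f_V\equiv 0$, hence $\bigl(\mathrm{Ric}_{\overline M}(N,N)+|A|^2\bigr)f_V\equiv 0$, and since $f_V$ never vanishes this forces both $|A|^2\equiv 0$ and $\mathrm{Ric}_{\overline M}(N,N)\equiv 0$. This is exactly part $(a)$: $M$ is totally geodesic and the Ricci curvature of $\overline M$ in the direction of $N$ vanishes identically.

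For part $(b)$, assume $M$ is noncompact and $\mathrm{Ric}_M\ge 0$, and consider $V^{\top}$ itself, or rather the function $|V^{\top}|^2 = \langle V,V\rangle - \langle V,N\rangle^2 \cdot(-1)$ up to sign; more precisely I would work with $h:=\tfrac12|V^{\top}|^2$. Using that $V$ is parallel, $\overline\nabla_X V = 0$, one computes $\nabla_X V^{\top} = f_V\, AX$ (the tangential part of $\overline\nabla_X V$ after accounting for the normal component $f_V N$), so that with $A\equiv 0$ from part $(a)$ we get $V^{\top}$ is a parallel vector field on $M$. Then a Bochner-type argument, or directly the fact that a parallel (hence harmonic) vector field of integrable norm on a complete manifold with $\mathrm{Ric}_M\ge 0$ must vanish identically, gives $V^{\top}\equiv 0$ on $M$; but $V^{\top}\equiv 0$ means $V$ is everywhere normal to $M$, i.e.\ $N = \pm\nu$ along $M$, which is precisely the statement that $x(M)$ lies in a leaf of the distribution $V^{\bot}$.

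The main obstacle I anticipate is the rigorous invocation of the Liouville/Caminha-type vanishing theorem under the precise hypotheses at hand: one needs a version asserting that on a complete (noncompact) Riemannian manifold, a function with $\Delta u$ of constant sign and $|\nabla u|\in\mathcal L^1$ is harmonic, and then — for part $(b)$ — a companion statement forcing a parallel vector field with $\mathcal L^1$ norm to vanish when $\mathrm{Ric}_M\ge 0$. Verifying that $M$ indeed falls in the scope of these results (completeness is given; no volume growth hypothesis should be needed for the $\mathcal L^1$-gradient version) and correctly chaining the two applications — first to $g$ to kill $H$, then to $f_V$ to get total geodesy and the Ricci vanishing — is where the care lies; the differential-geometric identities themselves are the routine part, being specializations of \eqref{eq:Gradient of f nu}–\eqref{eq:Laplacian of nu inner eta} already recorded above.
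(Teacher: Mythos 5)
Your proposal is correct and follows essentially the same route as the paper: first apply the Yau/Caminha Liouville-type result to $g=\langle V,W\rangle$ (whose Laplacian reduces to $n\psi_W Hf_V$) to force $H\equiv 0$, then to $f_V$ to get $\mathrm{Ric}_{\overline M}(N,N)+|A|^2=0$, and finally, for part $(b)$, use that $V^{\top}$ has constant norm together with Yau's infinite-volume theorem for complete noncompact manifolds of nonnegative Ricci curvature to conclude $V^{\top}\equiv 0$. The only cosmetic difference is that you phrase the last step via $V^{\top}$ being parallel on $M$, while the paper deduces the constancy of $|V^{\top}|$ from $\nabla f_V=0$ and the parallelism of $V$; the substance is identical.
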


\begin{proof}\ \

\noindent (a) Since $V$ is parallel and $W$ is homothetic and nonparallel, it follows from (\ref{eq:Gradient of f nu}), (\ref{eq:Gradient of g}), (\ref{eq:Laplacian of f nu}) and (\ref{eq:Laplacian of nu inner eta}) that $\nabla f_V=-A(V^{\top})$, $\nabla g=\psi_WV^{\top}$,
\begin{equation}\label{eq:Laplacian of f nu for parallel nu}
 \Delta f_V=nV^{\top}(H)+(\text{\rm Ric}_{\overline M}(N,N)+|A|^2)f_V
\end{equation}
and
$$\Delta g=nH\psi_Wf_V,$$
with $\psi_W$ being a nonzero constant. Therefore, the assumption $|V^{\top}|\in\mathcal L^1(M)$ guives $|\nabla
g|\in\mathcal L^1(M)$, and the assumption on $H$, together with the fact that $|f_V|>0$ on $M$, assures that $\Delta g$ is either nonnegative or nonpositive on $M$. Therefore, the Corollary on page $660$ of~\cite{Yau:76} gives $\Delta g=0$ on $M$, and hence $H=0$ on $M$.

We now look at (\ref{eq:Laplacian of f nu for parallel nu}), which resumes to
$$\Delta f_V=(\text{\rm Ric}_{\overline M}(N,N)+|A|^2)f_V,$$
and hence doesn't change sign on $M$ too. We also note that the boundedness of $|A|$ on $M$ gives
$$|\nabla f_V|\leq|A||V^{\top}|\in\mathcal L^1(M).$$
Appealing again to the Corollary on page $660$ of~\cite{Yau:76}, we get $\Delta f_V=0$ on $M$, so that
$$\text{\rm Ric}_{\overline M}(N,N)+|A|^2=0$$
on $M$. Since $\text{\rm Ric}_{\overline{M}}(N,N)\geq 0$, then we get $\text{\rm Ric}_{\overline{M}}(N,N)=0$ and $A=0$ on $M$, i.e., $M$ is totally geodesic.\\

\noindent (b) $A=0$ on $M$ gives $\nabla f_V=0$ on $M$, so that $f_V=\langle V,N\rangle$ is constant and nonzero on $M$. However, $|V|^2$ is constant on $\overline M$ (since $V$ is parallel) and
$$|V^{\top}|^2=|V|^2+\langle V,N\rangle^2,$$
so that $|V^{\top}|$ is also constant on $M$. Therefore,
$$+\infty>\int_M|V^{\top}|dM=|V^{\top}|\,{\rm Vol}(M).$$
But since $M$ is noncompact and has nonnegative Ricci curvature, another theorem of Yau (Theorem $7$ of~\cite{Yau:76}) gives ${\rm Vol}(M)=+\infty$, and hence the only possibility is $|V^{\top}|=0$. Therefore, Cauchy-Schwarz inequality gives that $V$ is parallel to $N$, and $x(M)$ is contained in a leaf of $V^{\bot}$.
\end{proof}

For the next result we need a small refinement of the analytical tool of Yau's result used in the above proof. We quote it below, refering the reader to~\cite{Caminha:09} for a proof.

\begin{lemma}\label{prop:first corollary of Yau 76}
Let $X\in\mathfrak X(M)$ be such that ${\rm div}_MX$ doesn't change sign on $M$. If $|X|\in\mathcal L^1(M)$, then ${\rm div}X=0$ on $M$.
\end{lemma}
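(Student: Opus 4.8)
The plan is to reproduce Yau's argument for the corollary on page $660$ of \cite{Yau:76}, but carried out directly at the level of the vector field $X$ rather than of a gradient $\nabla u$. Without loss of generality assume ${\rm div}_MX\geq 0$ on $M$ (otherwise replace $X$ by $-X$). Fix $o\in M$, let $r$ be the Riemannian distance to $o$, and for $R>0$ let $B_R=\{x\in M:r(x)<R\}$, which is relatively compact by completeness and Hopf--Rinow. The goal is to prove $\int_M{\rm div}_MX\,dM=0$; once this is known, the sign hypothesis together with the continuity of ${\rm div}_MX$ forces ${\rm div}_MX\equiv 0$ on $M$.

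First I would apply the divergence theorem on $B_R$ for those $R$ for which $\partial B_R$ is a regular hypersurface (this holds for almost every $R$ by Sard's theorem, or one may instead exhaust $M$ by a smooth increasing sequence of relatively compact domains). This gives, with $\nu$ the outward unit conormal,
$$0\leq\int_{B_R}{\rm div}_MX\,dM=\int_{\partial B_R}\langle X,\nu\rangle\,d\sigma\leq\int_{\partial B_R}\abs{X}\,d\sigma.$$
Next, applying the coarea formula to $r$, which is Lipschitz with $\abs{\nabla r}=1$ almost everywhere off the cut locus, yields
$$\int_0^{\infty}\left(\int_{\partial B_t}\abs{X}\,d\sigma\right)dt=\int_M\abs{X}\,\abs{\nabla r}\,dM=\int_M\abs{X}\,dM<\infty,$$
so the nonnegative function $t\mapsto\int_{\partial B_t}\abs{X}\,d\sigma$ is Lebesgue integrable on $(0,\infty)$, and hence there is a sequence $R_j\to+\infty$ along which $\int_{\partial B_{R_j}}\abs{X}\,d\sigma\to 0$.

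Combining the two displays, $\int_{B_{R_j}}{\rm div}_MX\,dM\to 0$ as $j\to\infty$; since the integrand is nonnegative and $B_{R_j}\uparrow M$, the monotone convergence theorem gives $\int_M{\rm div}_MX\,dM=0$, whence ${\rm div}_MX\equiv 0$, as claimed. I expect the only genuinely delicate point to be the interaction between the non-smooth distance function and the divergence theorem/coarea step: one must arrange that the exhausting domains have boundaries carrying a well-defined conormal for which Stokes' theorem applies, and one must absorb the cut locus in the coarea computation. Both are routine via Sard's theorem and the fact that the cut locus is null, but that is where all the technical care lies. An alternative that bypasses geodesic-sphere regularity altogether is to test against Lipschitz cutoffs $\varphi_R=\chi(r/R)$, where $\chi$ is smooth with $\chi\equiv 1$ on $[0,1]$, ${\rm supp}\,\chi\subset[0,2)$ and $\abs{\chi'}\leq 2$: then $\int_M\varphi_R\,{\rm div}_MX\,dM=-\int_M\langle\nabla\varphi_R,X\rangle\,dM$ has modulus at most $\tfrac{2}{R}\int_{B_{2R}\setminus B_R}\abs{X}\,dM\to 0$, while the left-hand side increases to $\int_M{\rm div}_MX\,dM$ by monotone convergence, giving the same conclusion.
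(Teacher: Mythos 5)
Your argument is correct, and it is essentially a self-contained reconstruction of what the paper outsources: the lemma is stated here without proof, the reader being referred to \cite{Caminha:09}, where it is deduced from Yau's Stokes-type theorem for integrable $(n-1)$-forms on complete Riemannian manifolds (apply it to $\omega=\iota_X(dM)$, so that $|\omega|\leq|X|\in\mathcal L^1(M)$ and $d\omega=({\rm div}_MX)\,dM$; Yau's theorem produces an exhaustion $B_i$ with $\int_{B_i}d\omega\to 0$, and the sign condition together with monotone convergence forces ${\rm div}_MX\equiv 0$). You instead reprove the needed decay of the boundary terms from scratch, which makes the lemma independent of \cite{Caminha:09} but buys nothing beyond that; the underlying mechanism is identical. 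Two remarks. First, completeness of $M$ is essential (the statement is false on an incomplete manifold, e.g.\ a bounded vector field of positive divergence on a ball) and you invoke it via Hopf--Rinow; it is indeed part of the paper's standing hypotheses on $M$, but since the lemma is stated in isolation it should be made explicit. Second, your first route has exactly the weakness you flag: Sard's theorem does not apply to the merely Lipschitz distance function $r$, so ``$\partial B_R$ is a regular hypersurface for a.e.\ $R$'' requires either passing to sets of finite perimeter and the generalized Gauss--Green formula, or replacing geodesic balls by a smooth exhaustion. Your cutoff variant avoids all of this and is the argument to keep: $\varphi_RX$ is a compactly supported Lipschitz field, $\int_M{\rm div}_M(\varphi_RX)\,dM=0$ gives $\bigl|\int_M\varphi_R\,{\rm div}_MX\,dM\bigr|\leq\frac{2}{R}\int_{B_{2R}\setminus B_R}|X|\,dM\to 0$, and monotone convergence finishes. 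That is precisely the content of Yau's lemma in the only case needed here.
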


\begin{theorem}\label{thm:Bernstein-type 2}
Let $\overline M$ have nonnegative Ricci curvature, $V$ be a
homothetic vector field on $\overline{M}^{n+1}$, and
$x:M^n\rightarrow\overline{M}^{n+1}$ be as before. If $|A|$ is
bounded, $|V^{\top}|$ is integrable and $H$ is constant on $M$, then
$M$ is totally umbilical and the Ricci curvature of $\overline M$ in
the direction of $N$ vanishes identically.
\end{theorem}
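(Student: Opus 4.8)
The plan is to mimic the structure of the proof of Theorem~\ref{thm:Bernstein-type}, but now working with the single homothetic field $V$ and the constant mean curvature assumption rather than with an auxiliary field $W$. Since $V$ is homothetic, its conformal factor $\psi_V$ is a nonzero constant (the parallel case being degenerate and handled separately, or simply excluded by the nonparallel nature implicit in ``homothetic''; if $\psi_V=0$ then $V$ is parallel and one is back in the setting of Theorem~\ref{thm:Bernstein-type}). The starting point is equation~(\ref{eq:Laplacian of f nu}), which with $H$ constant and $\psi=\psi_V$ constant becomes
\begin{equation}\label{eq:plan laplacian}
 \Delta f_V = \left\{\text{\rm Ric}_{\overline M}(N,N)+|A|^2\right\}f_V + nH\psi_V,
\end{equation}
since the term $nV^{\top}(H)$ vanishes and $N(\psi_V)=0$.

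First I would extract a divergence-form identity amenable to Lemma~\ref{prop:first corollary of Yau 76}. The natural candidate is $X = \nabla f_V = -A(V^{\top})$, whose divergence is $\Delta f_V$. The obstacle is that the right-hand side of~(\ref{eq:plan laplacian}) need not have a sign, because of the constant term $nH\psi_V$: the bracket times $f_V$ is one sign (recall $f_V<0$ and $\text{\rm Ric}_{\overline M}(N,N)+|A|^2\geq 0$), but $nH\psi_V$ may oppose it. To kill this, I would combine $f_V$ with the function $g=\langle V,V\rangle\circ x$, i.e. the restriction to $M$ of $|V|^2$ on $\overline M$; since $V$ is closed conformal, on $\overline M$ one has $\overline\nabla\langle V,V\rangle = 2\psi_V V$ (equation~(\ref{eq:gradient conformal})), so along $M$ the tangential gradient is $\nabla g = 2\psi_V V^{\top}$, and a direct computation (or specializing~(\ref{eq:Laplacian of nu inner eta}) with $W=V$) gives $\Delta g = 2n\psi_V^2 + 2nH\psi_V f_V$. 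Then I would look for constants $a,b$ so that $h:=a f_V + b g$ has $\Delta h$ of one sign and $|\nabla h|\in\mathcal L^1(M)$; the integrability is automatic since $|\nabla f_V|\leq |A|\,|V^{\top}|\in\mathcal L^1(M)$ by boundedness of $|A|$ and $|\nabla g| = 2|\psi_V|\,|V^{\top}|\in\mathcal L^1(M)$. Applying Lemma~\ref{prop:first corollary of Yau 76} to $\nabla h$ forces $\Delta h\equiv 0$.

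The heart of the matter is choosing the combination correctly and reading off the conclusion. I expect that the right move is first to deal with $g$ alone: the sign of $\Delta g = 2n\psi_V(\psi_V + H f_V)$ is governed by $\psi_V + H f_V$, which I would argue does not change sign using the constancy of $H$ together with a maximum-principle or completeness argument — alternatively, one massages~(\ref{eq:plan laplacian}) to isolate $H$ and substitutes. Once $\Delta g\equiv 0$ (hence, via the formula, $\psi_V + H f_V\equiv 0$, which already ties $f_V$ to the constant $-\psi_V/H$ when $H\neq 0$, and when $H=0$ reduces to the previous theorem's situation giving $\psi_V=0$, a contradiction, so $H\neq 0$), I would plug back into~(\ref{eq:plan laplacian}) to find $\Delta f_V = (\text{\rm Ric}_{\overline M}(N,N)+|A|^2)f_V + nH\psi_V$, and using $H f_V = -\psi_V$ rewrite the constant as $nH\psi_V = -nH^2 f_V$, so $\Delta f_V = (\text{\rm Ric}_{\overline M}(N,N)+|A|^2 - nH^2)f_V$. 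Now Cauchy--Schwarz gives $|A|^2\geq nH^2$, so the bracket is $\geq 0$ while $f_V<0$, whence $\Delta f_V\leq 0$ with $|\nabla f_V|\in\mathcal L^1(M)$; Lemma~\ref{prop:first corollary of Yau 76} again yields $\Delta f_V\equiv 0$, forcing $\text{\rm Ric}_{\overline M}(N,N)=0$ and $|A|^2 = nH^2$, i.e. equality in Cauchy--Schwarz, which means $A = HI$: the hypersurface is totally umbilical. The main obstacle, as indicated, is justifying that $\Delta g$ (equivalently $\psi_V + Hf_V$) has constant sign without already assuming more regularity; I anticipate this is where the completeness of $M$ and a Yau-type integrability trick — possibly applied to $\nabla g$ directly before knowing the sign, using that $\psi_V + Hf_V$ is, up to the constant factor $2n\psi_V$, itself $\Delta g$ of a bounded-gradient function — must be invoked carefully, perhaps by a bootstrap: first show $g$ is bounded or that $\int_M|\nabla g| <\infty$ forces $\Delta g$ to integrate to zero, then argue pointwise vanishing.
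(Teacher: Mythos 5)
Your overall strategy is the right one, but the specific route you commit to has a genuine gap exactly at the step you yourself flag as ``the heart of the matter,'' and that step cannot be repaired as stated. You propose to first apply Lemma~\ref{prop:first corollary of Yau 76} to $\nabla g=2\psi_V V^{\top}$ alone, which requires $\Delta g=2n\psi_V(\psi_V+Hf_V)$ not to change sign; but $\psi_V+Hf_V$ is the sum of a nonzero constant and the variable term $Hf_V$ (with $f_V<0$ unbounded in general), so it has no a priori sign, and no maximum principle or bootstrap will produce one. Worse, the intermediate conclusion you want to extract, $\psi_V+Hf_V\equiv 0$, i.e.\ ${\rm div}_M V^{\top}\equiv 0$, is strictly stronger than what the theorem delivers: for a totally umbilical $M$ with $A=-H\,{\rm Id}$ one only gets ${\rm div}_M(\nabla f_V-HV^{\top})=0$, while ${\rm div}_M V^{\top}=n(\psi_V+Hf_V)$ may well be nonzero. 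So the pivot ``deal with $g$ alone, then substitute $Hf_V=-\psi_V$'' builds the rest of the argument on an unproven and, in general, unavailable identity.

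The fix is the linear-combination idea you mention and then abandon. The paper applies Lemma~\ref{prop:first corollary of Yau 76} to the single vector field $X=\nabla f_V-HV^{\top}$, which for $\psi_V\neq 0$ is precisely $\nabla\bigl(f_V-\tfrac{H}{2\psi_V}g\bigr)$, i.e.\ your $h=af_V+bg$ with $a=1$, $b=-H/2\psi_V$ (and which still makes sense when $\psi_V=0$, so no case split is needed). Using ${\rm div}_M(V^{\top})=n\psi_V+nHf_V$ (equation (\ref{eq:pulo do gato 1})) together with (\ref{eq:Laplacian of f nu}), the offending constant $nH\psi_V$ cancels and one gets ${\rm div}_M X=\{\mathrm{Ric}_{\overline M}(N,N)+|A|^2-nH^2\}f_V\leq 0$ by Cauchy--Schwarz and $f_V<0$; since $|X|\leq(|A|+|H|)|V^{\top}|\in\mathcal L^1(M)$, a single application of Lemma~\ref{prop:first corollary of Yau 76} forces ${\rm div}_M X\equiv 0$, hence $\mathrm{Ric}_{\overline M}(N,N)\equiv 0$ and $|A|^2\equiv nH^2$, which is the umbilicity. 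Your closing computation (rewriting the constant term and invoking Cauchy--Schwarz plus the Yau-type lemma) is exactly this argument; it just must be run on $\nabla f_V-HV^{\top}$ directly rather than after the unjustified reduction $Hf_V=-\psi_V$.
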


\begin{proof}
Since $H$ is constant on $M$ and $\psi_V$ is constant on $\overline M$, (\ref{eq:Laplacian of f nu}) reduces to
$$\Delta f_V=\left\{\text{\rm Ric}_{\overline M}(N,N)+|A|^2\right\}f_V+nH\psi_V.$$
Letting $\{e_1,\ldots,e_n\}$ be a moving frame on $M$, we have
\begin{equation}\label{eq:pulo do gato 1}
 \begin{split}
{\rm div}_M(V^{\top})&\,=\langle\overline\nabla_{e_k}(V+f_VN),e_k\rangle\\
&\,=n\psi_V-f_V\langle A(e_k),e_k\rangle\\
&\,=n\psi_V+nHf_V,
 \end{split}
\end{equation}
so that
\begin{equation}\label{eq:divergence of correct field}
 \begin{split}
{\rm div}_M(\nabla f_V-HV^{\top})&\,=\Delta f_V-nH\psi_V-nH^2f_V\\
&\,=\left\{\text{\rm Ric}_{\overline M}(N,N)+|A|^2-nH^2\right\}f_V.
 \end{split}
\end{equation}

Since $|f_V|>0$ on $M$, ${\rm Ric}_{\overline M}(N,N)\geq 0$ and $|A|^2\geq nH^2$ by Cauchy-Schwarz inequality (with equality if and only if $M$ is totally umbilical), this last expression does not change sign on $M$. Now observe that
$$|\nabla f_V-HV^{\top}|=|-AV^{\top}-HV^{\top}|\leq(|A|+H)|V^{\top}|\in\mathcal L^1(M),$$
so that the previous lemma gives ${\rm div}_M(\nabla f_V-HV^{\top})=0$ on $M$. Back to (\ref{eq:divergence of correct field}), we then get $\text{\rm Ric}_{\overline M}(N,N)=0$ and $|A|^2-nH^2=0$, and hence $M$ is totally umbilical.
\end{proof}

The previous result yields the following corollary on GRW spaces.

\begin{corollary}\label{coro:Bernstein-type for warped products}
Let $I\subset(0,+\infty)$ be an open interval, $F^n$ be an $n-$dimensional, complete oriented Riemannian manifold of nonnegative Ricci curvature, $\overline M^{n+1}=-I\times_tF^n$ and $x:M^n\rightarrow\overline{M}^{n+1}$ as before. If $|A|$ is bounded, $|(t\partial_t)^{\top}|\in\mathcal L^1(M)$ and $H$ is constant on $M$, then $M$ is totally umbilical and the Ricci curvature of $\overline{M}$ in the direction of $N$ vanishes identically. In
particular, if $F$ is closed and has positive Ricci curvature everywhere, then $x(M)\subset\{t_0\}\times F$, for some $t_0\in I$.
\end{corollary}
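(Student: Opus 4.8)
The plan is to read the statement as a direct specialization of Theorem~\ref{thm:Bernstein-type 2}, so that the work splits into checking the hypotheses of that theorem in the GRW setting and then squeezing the slice conclusion out of the identity $\mathrm{Ric}_{\overline M}(N,N)=0$.

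First I would identify the homothetic field. In $\overline M^{n+1}=-I\times_t F^n$ the canonical closed conformal vector field is $V=(\phi\circ\pi_I)\partial_t=t\partial_t$, with conformal factor $\psi_V=\phi'\circ\pi_I\equiv 1$; being closed with constant conformal factor, $V$ is homothetic, and since $I\subset(0,+\infty)$ we have $\langle V,V\rangle=-t^2<0$, so $V$ is timelike. Also $(t\partial_t)^{\top}=V^{\top}$, so the hypothesis $|(t\partial_t)^{\top}|\in\mathcal L^1(M)$ is exactly $|V^{\top}|\in\mathcal L^1(M)$. It remains to see that $\overline M$ has nonnegative Ricci curvature: by O'Neill's warped product Ricci formulas (Corollary $7.43$ of~\cite{ONeill:83}), with $\phi(t)=t$ so that $\phi'=1$ and $\phi''=0$, one obtains $\mathrm{Ric}_{\overline M}(\partial_t,\partial_t)=-n\phi''/\phi=0$, $\mathrm{Ric}_{\overline M}(\partial_t,X)=0$ for $X$ tangent to the fiber, and $\mathrm{Ric}_{\overline M}(X,X)=\mathrm{Ric}_F(X,X)+(n-1)|X|_F^{2}\geq 0$ whenever $\mathrm{Ric}_F\geq 0$; decomposing an arbitrary vector into a $\partial_t$-part and a fiber part then gives $\mathrm{Ric}_{\overline M}\geq 0$. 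With all hypotheses in place, Theorem~\ref{thm:Bernstein-type 2} applies and yields that $M$ is totally umbilical with $\mathrm{Ric}_{\overline M}(N,N)=0$.

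For the ``in particular'' part I would exploit $\mathrm{Ric}_{\overline M}(N,N)=0$ together with the same curvature formulas. Writing $N=a\partial_t+X$ with $X$ a lift of a vector tangent to $F$, the unit timelike condition gives $a^{2}=1+|X|^{2}\geq 1$, and
\[
0=\mathrm{Ric}_{\overline M}(N,N)=a^{2}\,\mathrm{Ric}_{\overline M}(\partial_t,\partial_t)+2a\,\mathrm{Ric}_{\overline M}(\partial_t,X)+\mathrm{Ric}_{\overline M}(X,X)=\mathrm{Ric}_F(X,X)+(n-1)|X|_F^{2}.
\]
If $\mathrm{Ric}_F>0$ everywhere this forces $X\equiv 0$ on $M$, hence $N=\pm\partial_t$ and therefore $(\partial_t)^{\top}\equiv 0$ on $M$. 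Since the height function $h=\pi_I\circ x$ satisfies $\nabla h=-(\partial_t)^{\top}$, it is constant on the connected manifold $M$, say $h\equiv t_0\in I$, which is to say $x(M)\subset\{t_0\}\times F$ (the compactness of $F$ making this slice the natural target). The only step that is not bookkeeping is the warped product Ricci computation, and even there the sole delicate point is keeping track of the Lorentzian signs in O'Neill's formulas (e.g.\ $\Delta_{(I,-dt^2)}\phi=-\phi''$); I do not anticipate a genuine obstacle, since the corollary is essentially Theorem~\ref{thm:Bernstein-type 2} read in coordinates.
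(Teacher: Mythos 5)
Your proof is correct and follows essentially the same route as the paper: reduce to Theorem~\ref{thm:Bernstein-type 2} with $V=t\partial_t$, then use $\mathrm{Ric}_{\overline M}(N,N)=0$ together with the warped-product Ricci formulas to force $N$ parallel to $\partial_t$, and conclude by connectedness of $M$. You merely make explicit two verifications the paper leaves implicit (that $-I\times_tF^n$ has nonnegative Ricci curvature when $\mathrm{Ric}_F\geq 0$, and the pointwise computation showing $\mathrm{Ric}_F>0$ forces the fiber component of $N$ to vanish), and both are carried out correctly.
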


\begin{proof}
 The first part follows from the theorem. To the second one, if $F$ is closed and has positive Ricci curvature everywhere, then, according to the previous result, the only possible direction for $N$ is that of $t\partial_t$. But if $N$ is parallel to $\partial_t$, then the connectedness of $M$ guarantees that $x(M)$ cannot jump from one leaf $\{t_0\}\times F$ to another.
\end{proof}

In what follows, we let $\mathbb R^n=\{x\in\mathbb L^{n+1};\,x_{n+1}=0\}$ and $\mathbb H^n=\{x\in\mathbb L^{n+1};\,\langle x,x\rangle=-1,\,\,x_{n+1}>0\}$. As a special case of the previous corollary, we get

\begin{corollary}
Let $x:M^n\rightarrow\mathbb L^{n+1}$ be an embedding, such that $x(M)$ is a complete spacelike radial graph over either $\mathbb R^n$ or $\mathbb H^n$, minus $k$ points. If $|A|$ is bounded, $H$ is constant and $p\mapsto|x(p)^{\top}|$ is integrable on $M$, then $k=0$ and $x(M)$ is either a spacelike hyperplane or a translation of $\mathbb H^n$.
\end{corollary}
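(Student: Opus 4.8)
The plan is to deduce this corollary from Corollary~\ref{coro:Bernstein-type for warped products} by exhibiting the ambient $\mathbb L^{n+1}$ (or rather the open subsets foliated by the relevant leaves) as a GRW space of the form $-I\times_t F^n$ with $F$ of the desired type. First I would recall the two standard conformal foliations of $\mathbb L^{n+1}$ by a closed conformal field. The position vector $V(x)=x$ is a homothetic field on $\mathbb L^{n+1}$ (indeed $\overline\nabla_X V=X$, so $\psi_V\equiv 1$), and $\langle V,V\rangle=\langle x,x\rangle$, so the leaves of $V^\bot$ are exactly the level sets $\langle x,x\rangle=\text{const}$. On the region where $\langle x,x\rangle<0$ these level sets are (up to homothety) copies of $\mathbb H^n$, and the timelike geodesic completeness of that region together with the remark following the GRW discussion shows it is isometric to $-I\times_t\mathbb H^n$ for an interval $I\subset(0,+\infty)$; similarly the light cone $\langle x,x\rangle=0$ degenerates, while for a graph over $\mathbb R^n$ one uses instead the translation-invariant slicing $-\mathbb R\times_1\mathbb R^n=\mathbb L^{n+1}$, i.e. the parallel field $\partial_{x_{n+1}}$, whose leaves are the spacelike hyperplanes $\{x_{n+1}=\text{const}\}$. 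In either case $F$ ($=\mathbb R^n$ or $\mathbb H^n$) is complete with nonnegative — in fact identically zero, or in the hyperbolic case negative — \emph{Ricci} curvature; here one must be slightly careful, because Corollary~\ref{coro:Bernstein-type for warped products} asks for nonnegative Ricci on $F$, which $\mathbb H^n$ does not satisfy, so the hyperbolic case needs Theorem~\ref{thm:Bernstein-type 2} applied directly rather than through the corollary.

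Next I would translate the hypotheses. The assumption that $x(M)$ is a spacelike radial graph over $\mathbb R^n$ or $\mathbb H^n$, minus $k$ points, means precisely that, after removing the $k$ punctures, $x$ is a section of the projection onto the $F$-factor of the appropriate GRW, so that $\pi_F\big|_M$ is an immersion; completeness of $x(M)$ as a spacelike hypersurface, together with boundedness considerations on the warping function $\phi\equiv t$ (or $\phi\equiv 1$) along $M$ — which follows once one knows the graph lies in a fixed slab, or can be arranged by the integrability hypothesis on $|x(p)^\top|=|V^\top|$ — lets one invoke the covering-map statement from~\cite{ABC:03} quoted in Section~2: $\pi_F\big|_{M}\colon M\to F$ is a covering map. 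Since $\mathbb R^n$ and $\mathbb H^n$ are simply connected, $\pi_F\big|_M$ is then a diffeomorphism onto $F$, and hence $M$ is diffeomorphic to the full $F$, forcing $k=0$ (the $k$ punctures, if present, would make $M$ fail to be the total space of a trivial covering of a simply connected base unless $k=0$).

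With $k=0$ established, the immersion $x\colon M^n\to\mathbb L^{n+1}$ is a complete spacelike hypersurface sitting inside one of the GRW presentations above, with $|A|$ bounded, $|V^\top|=|x^\top|$ (resp. the appropriate projection) integrable, and $H$ constant. Applying Theorem~\ref{thm:Bernstein-type 2} with the homothetic field $V$ (the position field in the $\mathbb H^n$ case, the parallel field $\partial_{x_{n+1}}$ in the $\mathbb R^n$ case — parallel is a special case of homothetic, so the theorem applies), we conclude $M$ is totally umbilical and $\text{Ric}_{\mathbb L^{n+1}}(N,N)=0$, the latter being automatic since $\mathbb L^{n+1}$ is flat. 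A complete totally umbilical spacelike hypersurface of $\mathbb L^{n+1}$ is, by the classification of such hypersurfaces in flat Lorentz space, either a spacelike hyperplane (the geodesic case, $H=0$) or a piece of a hyperbolic space $\langle x-x_0,x-x_0\rangle=-r^2$; completeness upgrades "a piece of" to "all of," and being a graph over $\mathbb R^n$ or $\mathbb H^n$ pins down which translate it is — namely a translation of $\mathbb H^n$ in the umbilical non-geodesic case, and an affine spacelike hyperplane in the geodesic case.

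I expect the main obstacle to be the bookkeeping in the first two paragraphs rather than in the last: one must verify carefully that "complete spacelike radial graph over $\mathbb R^n$ or $\mathbb H^n$ minus $k$ points" does fit the GRW framework (choosing the correct closed conformal field, checking that the removed points do not obstruct applying the covering-map result on $M$ itself, and handling the fact that $\mathbb H^n$ has \emph{negative} rather than nonnegative Ricci curvature so that one must go back to Theorem~\ref{thm:Bernstein-type 2} directly), and that the integrability hypothesis on $|x^\top|$ is exactly the $|V^\top|\in\mathcal L^1(M)$ hypothesis of that theorem. Once the translation of hypotheses is done correctly, the rest is a direct citation of Theorem~\ref{thm:Bernstein-type 2} plus the elementary classification of complete umbilical spacelike hypersurfaces of Lorentz--Minkowski space.
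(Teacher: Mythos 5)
Your overall strategy --- reduce to Theorem~\ref{thm:Bernstein-type 2} and then classify complete totally umbilical spacelike hypersurfaces of $\mathbb L^{n+1}$ --- is exactly what the paper intends (it offers no proof beyond ``special case of the previous corollary''), and your remark that $\mathbb H^n$ has negative Ricci curvature, so one must invoke Theorem~\ref{thm:Bernstein-type 2} directly (its hypothesis concerns the \emph{ambient} Ricci curvature, and $\mathbb L^{n+1}$ is flat) rather than quote Corollary~\ref{coro:Bernstein-type for warped products} verbatim, is correct and worth making. However, your choice of conformal field in the $\mathbb R^n$ case creates a mismatch with the hypotheses: if $V=\partial_{x_{n+1}}$, the theorem needs $|(\partial_{x_{n+1}})^{\top}|\in\mathcal L^1(M)$, while the corollary only supplies $|x(p)^{\top}|\in\mathcal L^1(M)$; since $|(\partial_{x_{n+1}})^{\top}|^2=\langle\partial_{x_{n+1}},N\rangle^2-1$ is governed by the hyperbolic angle and not by the position vector, neither quantity controls the other. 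The fix is to use the position field $V(x)=x$ in \emph{both} cases: it is homothetic on all of $\mathbb L^{n+1}$ (with $\psi_V\equiv1$), its tangential part is exactly $x^{\top}$, and Theorem~\ref{thm:Bernstein-type 2} does not care which foliation the hypersurface happens to be a graph over --- so the GRW bookkeeping of your first paragraph can be dropped entirely.

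Your route to $k=0$ via the covering-map statement of~\cite{ABC:03} also has a gap: that statement requires $\phi\circ\pi_I$ to be bounded on $M$, i.e.\ (for $\phi(t)=t$) that $\langle x,x\rangle$ be bounded along $M$, which is not among the hypotheses and does not obviously follow from the integrability of $|x^{\top}|$. Fortunately the detour is unnecessary: Theorem~\ref{thm:Bernstein-type 2} applies to the complete hypersurface $M$ whether or not the graph is punctured, and once $M$ is known to be complete, totally umbilical and of constant $H$, the classification in flat Lorentz space forces $x(M)$ to be an entire spacelike hyperplane or an entire translated hyperboloid, each of which is an unpunctured graph over the whole of $\mathbb R^n$ resp.\ $\mathbb H^n$; the conclusion $k=0$ then falls out at the end instead of having to be established first.
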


\begin{remark}\label{rem:sharpness of hypotheses}
The class of examples of Corollary $3.3$ of~\cite{Caminha:06} shows that the hypothesis on the integrability of $p\mapsto|x(p)^{\top}|$ is really necessary.
\end{remark}

\section{$r$-stability of spacelike hypersurfaces}\label{sec:Preliminaries}

For the time being, let $\overline M^{n+1}$ denote a time-oriented Lorentz manifold (i.e., not necessarily conformally stationary) with Lorentz metric $\overline g=\langle\,\,,\,\,\rangle$, volume element $d\overline M$ and semi-Riemannian connection $\overline\nabla$. We consider spacelike hypersurfaces $x:M^n\rightarrow\overline M^{n+1}$, namely, isometric
immersions from a connected, $n-$dimensional orientable Riemannian manifold $M^n$ into $\overline M$. We let $\nabla$ denote the Levi-Civita connection of $M^n$.

If $\overline M$ is time-orientable and $x:M^n\rightarrow\overline M^{n+1}$ is a spacelike  hypersurface, then $M^n$ is orientable (cf.~\cite{ONeill:83}) and one can choose a globally defined unit normal vector field $N$ on $M^n$ having the same time-orientation of $\overline M$; such an $N$ is said to be a {\em future-pointing Gauss map} of $M^n$. If we let $A$ denote the shape operator of $x$ with respect to $N$, then $A$ restricts to a self-adjoint linear map $A_p:T_pM\rightarrow T_pM$ at each $p\in M^n$.

For $1\leq r\leq n$, let $S_r(p)$ denote the $r$-th elementary symmetric function on the eigenvalues of $A_p$, so that one gets $n$ smooth functions $S_r:M^n\rightarrow\mathbb R$ for which $$\det(t\text{Id}-A)=(-1)^kS_kt^{n-k},$$
where $S_0=1$ by definition. For fixed $p\in M^n$, the spectral theorem allows us to choose on $T_pM$ an orthonormal basis $\{e_1,\ldots,e_n\}$ of eigenvectors of $A_p$, with corresponding
eigenvalues $\lambda_1,\ldots,\lambda_n$, respectively. One thus immediately sees that
$$S_r=\sigma_r(\lambda_1,\ldots,\lambda_n),$$
where $\sigma_r\in\mathbb R[X_1,\ldots,X_n]$ is the $r$-th elementary symmetric polynomial on the indeterminates
$X_1,\ldots,X_n$.

For $1\leq r\leq n$, one defines the $r$-th mean curvature $H_r$ of $x$ by
$${n\choose r}H_r=(-1)^rS_r=\sigma_r(-\lambda_1,\ldots,-\lambda_n).$$
One also let the $r$-th Newton transformation $P_r$ on $M^n$ be given by setting $P_0=\text{Id}$ and, for $1\leq r\leq n$, via the recurrence relation
\begin{equation}\label{eq:Newton operators}
P_r=(-1)^rS_r\text{Id}+AP_{r-1}.
\end{equation}

A trivial induction shows that
$$P_r=(-1)^r(S_r\text{Id}-S_{r-1}A+S_{r-2}A^2-\cdots+(-1)^rA^r),$$
so that Cayley-Hamilton theorem gives $P_n=0$. Moreover, since $P_r$ is a polynomial in $A$ for every $1\leq r\leq n$, it is also self-adjoint and commutes with $A$. Therefore, all bases of $T_pM$ diagonalizing $A$ at $p\in M^n$ also diagonalize all of the $P_r$ at $p$. If $\{e_1,\ldots,e_n\}$ is such a basis and $A_i$ denotes the restriction of $A$ to $\langle e_i\rangle^{\bot}\subset T_p\Sigma$, it is easy to see that
$$\det(t\text{Id}-A_i)=(-1)^kS_k(A_i)t^{n-1-k},$$
where
$$S_k(A_i)=\sum_{\stackrel{1\leq j_1<\ldots<j_k\leq n}{j_1,\ldots,j_k\neq i}}\lambda_{j_1}\cdots\lambda_{j_k}.$$
With the above notations, it is also immediate to check that $P_re_i=(-1)^rS_r(A_i)e_i$, and it is a standard fact that
\begin{enumerate}
\item[(i)] ${\rm tr}(P_r)=(-1)^r(n-r)S_r=b_rH_r$;
\item[(ii)] ${\rm tr}(AP_r)=(-1)^r(r+1)S_{r+1}=-b_rH_{r+1}$;
\item[(iii)] ${\rm tr}(A^2P_r)=(-1)^r(S_1S_{r+1}-(r+2)S_{r+2})$,
\end{enumerate}
where $b_r=(n-r){n\choose r}$.

Associated to each Newton transformation $P_r$ one has the second order linear differential operator $L_r:\mathcal
D(M)\rightarrow\mathcal D(M)$ given by
$$L_r(f)={\rm tr}(P_r\,\text{Hess}\,f).$$
In particular, $L_0=\Delta$, the Laplace operator on smooth functions on $M$. If $\overline M^{n+1}$ is of constant sectional curvature, H. Rosenberg~\cite{Rosenberg:93} proved that
$$L_r(f)={\rm div}(P_r\nabla f).$$

A variation of $x$ is a smooth mapping
$$X:M^n\times(-\epsilon,\epsilon)\rightarrow\overline M^{n+1}$$
satisfying the following conditions:
\begin{enumerate}
\item[(i)] For $t\in(-\epsilon,\epsilon)$, the map $X_t:M^n\rightarrow\overline M^{n+1}$
given by $X_t(p)=X(t,p)$ is a spacelike immersion such that $X_0=x$.
\item[(ii)] $X_t\big|_{\partial M}=x\big|_{\partial M}$, for all $t\in(-\epsilon,\epsilon)$.
\end{enumerate}

The variational field associated to the variation $X$ is the vector field $\frac{\partial X}{\partial t}\Big|_{t=0}$. Letting $N_t$ denote the unit normal vector field along $X_t$ and
$f=-\langle\frac{\partial X}{\partial t},N_t\rangle$, we get
\begin{equation}\label{eq:decomposition of variational vector field}\frac{\partial X}{\partial t}=fN_t+\left(\frac{\partial X}{\partial
t}\right)^{\top},\end{equation} where $\top$ stands for tangential
components.

Following~\cite{BdCE:88}, we set the balance of volume of the variation $X$ as the function $\mathcal V:(-\epsilon,\epsilon)\rightarrow\mathbb R$ given by
$$\mathcal V(t)=\int_{M\times[0,t]}X^*(d\overline M),$$
and we say that $X$ is volume-preserving if $\mathcal V$ is constant.

Letting $dM_t$ denote the volume element of the metric induced on $M$ by $X_t$, we recall the following standard result (cf.~\cite{Xin:03}).

\begin{lemma}\label{lemma:first variation}
Let $\overline M^{n+1}$ be a time-oriented Lorentz manifold and $x:M^n\rightarrow\overline M^{n+1}$ a closed spacelike hypersurface. If $X:M^n\times(-\epsilon,\epsilon)\rightarrow\overline M^{n+1}$ is a variation of $x$, then
$$\frac{d\mathcal V}{dt}=\int_MfdM_t.$$
In particular, $X$ is volume-preserving if and only if $\int_MfdM_t=0$ for all $t$.
\end{lemma}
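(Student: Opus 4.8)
The plan is to compute $\frac{d\mathcal V}{dt}$ directly from the definition $\mathcal V(t)=\int_{M\times[0,t]}X^*(d\overline M)$ by differentiating under the integral sign, and to identify the resulting integrand as $f\,dM_t$ using the way the ambient volume element decomposes along the hypersurface $X_t$. First I would write, for fixed $t$, the pull-back $X^*(d\overline M)$ on $M\times(-\epsilon,\epsilon)$ and apply the fundamental theorem of calculus in the $t$-variable: since $\mathcal V(t)=\int_0^t\left(\int_M \iota_{\partial_s}X^*(d\overline M)\big|_{s}\right)ds$ after slicing the product by the factor $[0,t]$, we get $\frac{d\mathcal V}{dt}=\int_M \iota_{\partial_t}X^*(d\overline M)$, where $\iota$ denotes interior product. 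So the whole matter reduces to showing that the $n$-form $\iota_{\partial_t}X^*(d\overline M)$ on $M$ equals $f\,dM_t$.

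The key step is the pointwise computation of $\iota_{\partial_t}X^*(d\overline M)$. Fix $p\in M$ and $t$, and choose a local orthonormal frame $\{u_1,\dots,u_n\}$ for the metric induced by $X_t$, so that $X_{t*}u_1,\dots,X_{t*}u_n$ together with $N_t$ form an orthonormal basis of $T_{X(t,p)}\overline M$ (Lorentzian, with $N_t$ the timelike vector). By definition of the induced volume element, $(X_t^*d\overline M)(u_1,\dots,u_n)=\pm 1$, and up to the orientation convention $dM_t(u_1,\dots,u_n)=1$. Now evaluate $\iota_{\partial_t}X^*(d\overline M)$ on $(u_1,\dots,u_n)$: this is $d\overline M\big(X_*\partial_t, X_*u_1,\dots,X_*u_n\big)=d\overline M\big(\tfrac{\partial X}{\partial t}, X_{t*}u_1,\dots,X_{t*}u_n\big)$. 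Substituting the decomposition \eqref{eq:decomposition of variational vector field}, the tangential part $\left(\frac{\partial X}{\partial t}\right)^{\top}$ lies in the span of the $X_{t*}u_i$ and so contributes zero, leaving $d\overline M\big(fN_t,X_{t*}u_1,\dots,X_{t*}u_n\big)=f\cdot d\overline M(N_t,X_{t*}u_1,\dots,X_{t*}u_n)$. With orientations chosen compatibly this last factor is $1$, hence $\iota_{\partial_t}X^*(d\overline M)=f\,dM_t$ at $p$, and therefore $\frac{d\mathcal V}{dt}=\int_M f\,dM_t$.

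The main obstacle is purely bookkeeping of signs and orientations: one must fix once and for all the orientation of $M$, the induced orientation on $M\times[0,t]$, the future-pointing choice of $N_t$, and check that the Lorentzian volume element evaluated on $(N_t, X_{t*}u_1,\dots,X_{t*}u_n)$ comes out to $+1$ rather than $-1$ with these choices — this is exactly where the minus sign in the definition $f=-\langle\frac{\partial X}{\partial t},N_t\rangle$ is absorbed, since $\langle N_t,N_t\rangle=-1$. Once the integrand identity is in place, the final sentence of the lemma is immediate: $X$ is volume-preserving, i.e. $\mathcal V$ is constant, if and only if $\frac{d\mathcal V}{dt}\equiv 0$, which by the formula just proved is equivalent to $\int_M f\,dM_t=0$ for every $t\in(-\epsilon,\epsilon)$. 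I would also remark that the boundary condition (ii) in the definition of a variation is what guarantees the slicing argument produces no extra boundary term, though for a closed hypersurface $\partial M=\varnothing$ and this is vacuous.
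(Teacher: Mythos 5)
Your argument is correct and is the standard proof of the first variation of volume; note that the paper itself offers no proof of Lemma \ref{lemma:first variation}, simply quoting it as a known result from \cite{Xin:03}, so you are supplying the argument that the authors delegate to the reference. The two essential points --- slicing $M\times[0,t]$ so that $\tfrac{d\mathcal V}{dt}=\int_M\iota_{\partial_t}X^*(d\overline M)$, and then identifying $\iota_{\partial_t}X^*(d\overline M)=f\,dM_t$ pointwise by killing the tangential part of $\tfrac{\partial X}{\partial t}$ against the frame $X_{t*}u_1,\dots,X_{t*}u_n$ --- are exactly right, and your remark that the sign in $f=-\langle\tfrac{\partial X}{\partial t},N_t\rangle$ is forced by $\langle N_t,N_t\rangle=-1$ (so that the normal component of $\tfrac{\partial X}{\partial t}$ is $fN_t$, consistent with \eqref{eq:decomposition of variational vector field}) is the correct way to handle the Lorentzian convention. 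One small notational slip: the expression $(X_t^*d\overline M)(u_1,\dots,u_n)$ does not make sense as written, since $X_t^*d\overline M$ is the pull-back of an $(n+1)$-form to the $n$-manifold $M$ and is therefore identically zero; what you mean (and what you actually use two lines later) is that $dM_t(u_1,\dots,u_n)=1$ and that $d\overline M(N_t,X_{t*}u_1,\dots,X_{t*}u_n)=1$ with compatible orientations, i.e.\ that $dM_t$ is the pull-back of $\iota_{N_t}d\overline M$. With that phrase corrected the proof is complete, and the final equivalence with volume preservation follows as you say.
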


We remark that Lemma 2.2 of~\cite{BdCE:88} remains valid in the Lorentz context, i.e., if $f_0:M\rightarrow\mathbb R$ is a smooth function such that $\int_Mf_0dM=0$, then there exists a volume-preserving variation of $M$ whose variational field is $f_0N$.

According to~\cite{Barbosa:97}, we define the $r$-area functional $\mathcal A_r:(-\epsilon,\epsilon)\rightarrow\mathbb R$ associated to the variation $X$ by
$$\mathcal A_r(t)=\int_MF_r(S_1,S_2,\ldots,S_r)dM_t,$$
where $S_r=S_r(t)$ and $F_r$ is recursively defined by setting $F_0=1$, $F_1=-S_1$ and, for $2\leq r\leq n-1$,
$$F_r=(-1)^rS_r-\frac{c(n-r+1)}{r-1}F_{r-2}.$$
In particular, if $r=0$, then $\mathcal A_0$ is the classical area functional.

The Lorentz analogue of Proposition 4.1 of~\cite{Barbosa:97} is stated in the following Lemma (for another proof, see Lemma 2.2 of~\cite{Camargo:09}).

\begin{lemma}\label{lemma:computing the time-derivative of Sr}
If $x:M^n\rightarrow\overline M^{n+1}_c$ is a closed spacelike hypersurface of the time-oriented Lorentz manifold $\overline M^{n+1}_c$ of constant sectional curvature $c$, and
$X:M^n\times(-\epsilon,\epsilon)\rightarrow\overline M^{n+1}_c$ a variation of $x$, then
\begin{equation}\label{eq:differentiation of S_r}
\frac{\partial S_{r+1}}{\partial t}=(-1)^{r+1}\left[L_{r}f+c{\rm tr}(P_{r})f-{\rm tr}(A^2P_{r})f\right]+\langle\left(\frac{\partial X}{\partial t}\right)^{\top},\nabla S_{r+1}\rangle.
\end{equation}
\end{lemma}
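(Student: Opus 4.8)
The plan is to differentiate the characteristic-polynomial identity for the shape operator with respect to the variation parameter $t$, reducing everything to the time-derivative of the second fundamental form, and then to feed in the standard structure equations of the ambient space form. First I would fix $p\in M$ and a local orthonormal frame $\{e_1,\ldots,e_n\}$ on a neighborhood of $p$, and write $h_{ij}=\langle A e_i,e_j\rangle$ for the components of $A$. Since $\binom{n}{r+1}H_{r+1}=(-1)^{r+1}S_{r+1}$ and, more usefully, $(-1)^{r+1}S_{r+1}=\frac{1}{r+1}\operatorname{tr}(AP_r)$ up to the sign bookkeeping in (ii) above, the cleanest route is to use the algebraic fact that the derivative of $S_{r+1}$ in the entries of $A$ is encoded precisely by the Newton transformation $P_r$: concretely, $\frac{\partial S_{r+1}}{\partial t}=\sum_{i,j}(-1)^{r}(P_r)_{ij}\,\dot h_{ij}=(-1)^r\operatorname{tr}\!\big(P_r\,\dot A\big)$, where $\dot A$ denotes the $t$-derivative of the shape operator (the sign $(-1)^r$ is the one consistent with $P_r=(-1)^r(S_r\mathrm{Id}-\cdots)$). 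This is the point where I would be a little careful to match the paper's sign conventions, since the statement carries an overall $(-1)^{r+1}$.

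Next I would compute $\dot A$ along the variation. Decomposing the variational field as in~\eqref{eq:decomposition of variational vector field}, $\frac{\partial X}{\partial t}=fN_t+\big(\frac{\partial X}{\partial t}\big)^{\top}$, the tangential part merely contributes a Lie-derivative term, which accounts for the last summand $\langle(\partial X/\partial t)^{\top},\nabla S_{r+1}\rangle$ in~\eqref{eq:differentiation of S_r}; so it suffices to treat a normal variation with field $fN$ and add this term back at the end. For a normal variation, the classical Jacobi-type formula gives $\dot A = \nabla^2 f + fA^2 + f\,\overline R(\cdot,N)N$ (as an endomorphism of $TM$), where $\nabla^2 f=\operatorname{Hess} f$ and $\overline R$ is the ambient curvature operator. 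Plugging this into $\dot S_{r+1}=(-1)^r\operatorname{tr}(P_r\dot A)$ yields
\[
\frac{\partial S_{r+1}}{\partial t}=(-1)^r\Big[\operatorname{tr}(P_r\operatorname{Hess} f)+f\operatorname{tr}(A^2P_r)+f\operatorname{tr}\big(P_r\,\overline R(\cdot,N)N\big)\Big]+\langle(\tfrac{\partial X}{\partial t})^{\top},\nabla S_{r+1}\rangle.
\]
By definition $\operatorname{tr}(P_r\operatorname{Hess} f)=L_rf$, so two of the three interior terms are already in the desired form (modulo the global sign, which flips $(-1)^r$ to $(-1)^{r+1}$ once the minus sign on $\operatorname{tr}(A^2P_r)$ is pulled inside — exactly the shape of the bracket in~\eqref{eq:differentiation of S_r}).

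The one remaining point is the curvature term $\operatorname{tr}\big(P_r\,\overline R(\cdot,N)N\big)$. Here I would invoke the hypothesis that $\overline M^{n+1}_c$ has constant sectional curvature $c$: then $\overline R(X,N)N = c\,(\langle N,N\rangle X - \langle X,N\rangle N) = -c\,X$ for $X$ tangent to $M$ (using $\langle N,N\rangle=-1$ and $\langle X,N\rangle=0$), so $\overline R(\cdot,N)N = -c\,\mathrm{Id}$ on $TM$ and $\operatorname{tr}\big(P_r\,\overline R(\cdot,N)N\big) = -c\operatorname{tr}(P_r)$. Substituting and absorbing the signs gives precisely
\[
\frac{\partial S_{r+1}}{\partial t}=(-1)^{r+1}\big[L_r f + c\operatorname{tr}(P_r) f - \operatorname{tr}(A^2P_r) f\big]+\langle(\tfrac{\partial X}{\partial t})^{\top},\nabla S_{r+1}\rangle,
\]
which is~\eqref{eq:differentiation of S_r}. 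The main obstacle I anticipate is not conceptual but bookkeeping: deriving the Jacobi formula $\dot A=\operatorname{Hess} f+fA^2+f\overline R(\cdot,N)N$ with the signs appropriate to a \emph{Lorentzian} (timelike-normal) hypersurface — where the Gauss and Codazzi equations and the normalization $\langle N,N\rangle=-1$ introduce sign changes relative to the Riemannian case — and then tracking those signs through the identity $\dot S_{r+1}=(-1)^r\operatorname{tr}(P_r\dot A)$ so that the final overall sign is $(-1)^{r+1}$ rather than $(-1)^r$. Alternatively, since the Lemma is stated as the ``Lorentz analogue of Proposition 4.1 of~\cite{Barbosa:97}'', one may simply transcribe that argument, replacing the Riemannian curvature identity by its Lorentzian counterpart at the single place where the ambient curvature enters; this is the route I would take if a self-contained derivation threatens to become unwieldy.
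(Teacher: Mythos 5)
Your strategy is the right one, and it is essentially the only proof available for comparison: the paper itself gives no argument for this lemma, deferring to Proposition 4.1 of \cite{Barbosa:97} and Lemma 2.2 of \cite{Camargo:09}, both of which proceed exactly as you propose — reduce to a normal variation, use the algebraic identity $\partial_t S_{r+1}=(-1)^{r}{\rm tr}(P_r\dot A)$ (the $(-1)^r$ matching this paper's normalization $P_r=(-1)^r(S_r{\rm Id}-\cdots)$), and substitute the evolution equation of the shape operator together with the constant-curvature identity ${\rm tr}\bigl(P_r\,\overline R(\cdot,N)N\bigr)=-c\,{\rm tr}(P_r)$. Your handling of the tangential part of the variation field and of the curvature term is correct.

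There is, however, a genuine error at the one step you yourself flagged as delicate, and as written your displayed equations do not yield \eqref{eq:differentiation of S_r}. For a spacelike hypersurface with timelike unit normal ($\langle N,N\rangle=-1$) the evolution equation is
\[
\dot A=-\,\text{Hess}\,f+fA^{2}+f\,\overline R(\cdot,N)N,
\]
with a \emph{minus} sign on the Hessian: the factor $\langle N,N\rangle=-1$ enters twice in the computation of $\partial_t\langle N,\overline\nabla_{e_i}e_j\rangle$ (once through $e_ie_j(f)\langle N,N\rangle$ and once through $\overline\nabla_{\partial_t}N=+\nabla f$), flipping the sign of the full Hessian relative to the Riemannian case, while the $+fA^2$ term survives only after one also accounts for $\partial_t g_{ij}=-2f h_{ij}$ in passing from the bilinear form to the endomorphism. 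If one instead uses the Riemannian formula $\dot A=+\text{Hess}\,f+fA^{2}+f\overline R(\cdot,N)N$ as in your display, then substituting ${\rm tr}(P_r\overline R(\cdot,N)N)=-c\,{\rm tr}(P_r)$ gives
\[
(-1)^{r}\bigl[L_rf+{\rm tr}(A^2P_r)f-c\,{\rm tr}(P_r)f\bigr]=(-1)^{r+1}\bigl[-L_rf+c\,{\rm tr}(P_r)f-{\rm tr}(A^2P_r)f\bigr],
\]
which differs from \eqref{eq:differentiation of S_r} in the sign of the $L_rf$ term alone; no amount of ``absorbing the signs'' can repair this, since the other two terms already carry the correct signs. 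The claim that your substitution ``gives precisely'' the stated formula is therefore false as it stands. With the corrected Lorentzian $\dot A$ the computation closes up and the proof is complete.
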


As in~\cite{Barbosa:97}, the previous lemma allows us to compute the first variation of the $r$-area functional, according to the following

\begin{proposition}\label{prop:first variation}
Under the hypotheses of Lemma~{\rm\ref{lemma:computing the time-derivative of Sr}}, we have
\begin{equation}\label{eq:first variation}
\mathcal A_r'(t)=\int_M[(-1)^{r+1}(r+1)S_{r+1}+c_r]f\,dM_t,
\end{equation}
where $c_r=0$ if $r$ is even and $c_r=-\frac{n(n-2)(n-4)\ldots(n-r+1)}{(r-1)(r-3)\ldots 2}(-c)^{(r+1)/2}$ if $r$ is odd.
\end{proposition}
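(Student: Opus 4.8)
The plan is to differentiate the $r$-area functional $\mathcal A_r(t)=\int_M F_r(S_1,\dots,S_r)\,dM_t$ directly, using the recursive definition of $F_r$ together with Lemma~\ref{lemma:computing the time-derivative of Sr}. First I would record the standard formula for the time-derivative of the volume element, namely $\frac{\partial}{\partial t}(dM_t)=\big(\operatorname{div}_M(\tfrac{\partial X}{\partial t})^{\top}-nHf\big)dM_t = \big(\operatorname{div}_M(\tfrac{\partial X}{\partial t})^{\top} + (-1)^{1}S_1 f\big)\,dM_t$ along $X_t$; since we evaluate the first variation we may work at an arbitrary $t$ and treat $X_t$ as the reference immersion. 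Then, writing $\mathcal A_r' = \int_M \big(\sum_{j} \tfrac{\partial F_r}{\partial S_j}\tfrac{\partial S_j}{\partial t}\big)dM_t + \int_M F_r\,\tfrac{\partial}{\partial t}(dM_t)$, I would substitute (\ref{eq:differentiation of S_r}) for each $\partial S_{j}/\partial t$. The tangential terms $\langle(\partial X/\partial t)^{\top},\nabla S_j\rangle$ and the divergence term coming from the volume element combine, after an integration by parts (using that $X_t|_{\partial M}$ is fixed, or that $M$ is closed), into a total divergence that integrates to zero; this isolates the purely normal contribution $\int_M(\cdots)f\,dM_t$.

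The heart of the computation is then an induction on $r$ showing that the coefficient of $f$ reduces to $(-1)^{r+1}(r+1)S_{r+1}+c_r$. The inductive step uses three ingredients: the recurrence $F_r=(-1)^rS_r-\frac{c(n-r+1)}{r-1}F_{r-2}$, the trace identities (i)--(iii) for $P_r$ (in particular $\operatorname{tr}(P_{r-1})=b_{r-1}H_{r-1}=(-1)^{r-1}(n-r+1)S_{r-1}$), and the Rosenberg identity $L_r(f)=\operatorname{div}(P_r\nabla f)$ valid in the constant-curvature ambient, so that $\int_M L_{r-1}f\,dM_t=0$. After discarding the divergence terms one is left with the algebraic relation $(-1)^r\tfrac{\partial S_r}{\partial t}$ contributing, modulo divergences, $-\big[(-1)^{r}\operatorname{tr}(A^2P_{r-1}) - c\,(-1)^{r}\operatorname{tr}(P_{r-1})\big]f$ plus the volume-element term; expanding $\operatorname{tr}(A^2P_{r-1})$ via (iii) and collecting produces $(-1)^{r+1}(r+1)S_{r+1}$ together with a leftover multiple of $S_{r-1}$, and it is precisely this leftover that is cancelled by the contribution of the $-\frac{c(n-r+1)}{r-1}F_{r-2}$ summand through the inductive hypothesis — this forced cancellation is what makes the recursive definition of $F_r$ the "right" one, and it is where the constants $c_r$ are generated.

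The main obstacle I anticipate is purely bookkeeping: correctly tracking the signs $(-1)^r$ through the Newton-transformation trace identities, the recurrence for $F_r$, and the factor $(-1)^{r+1}$ in (\ref{eq:differentiation of S_r}), and verifying that the residual $S_{r-1}$-terms cancel with the exact numerical coefficient $-\frac{c(n-r+1)}{r-1}$ to yield the closed form $c_r=-\frac{n(n-2)\cdots(n-r+1)}{(r-1)(r-3)\cdots 2}(-c)^{(r+1)/2}$ for odd $r$ and $c_r=0$ for even $r$. I would organize this by separating the even and odd cases from the start (since $c_r=0$ in the even case, the induction there is cleaner and serves as a sanity check), and I would verify the base cases $r=0$ ($\mathcal A_0'=-\int_M S_1 f\,dM_t=\int_M n H f\,dM_t$) and $r=1$ directly against (\ref{eq:first variation}) before running the induction. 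Throughout, the Lorentzian sign conventions enter only through the fixed relations $H_r=(-1)^rS_r\binom{n}{r}^{-1}$ and $f=-\langle\partial X/\partial t,N_t\rangle$ already set up in the text, so no new geometric input beyond Lemma~\ref{lemma:computing the time-derivative of Sr} is needed.
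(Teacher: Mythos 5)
Your plan is correct and is exactly the argument the paper intends: the paper gives no proof of Proposition~\ref{prop:first variation}, merely invoking Lemma~\ref{lemma:computing the time-derivative of Sr} ``as in \cite{Barbosa:97}'', and your combination of the first variation of $dM_t$, that lemma, Rosenberg's divergence form of $L_r$ (to kill the $\int_M L_{r-1}f\,dM_t$ terms on the closed $M$), and induction on the recurrence for $F_r$ is precisely the Barbosa--Colares computation transplanted to the Lorentz setting. The only blemish is an internal sign inconsistency in your displayed formula for $\partial_t(dM_t)$: with the paper's conventions $nH=-S_1$, so the normal term is $-S_1f=+nHf$ rather than $-nHf$; this is harmless since your parenthetical rewriting $(-1)^1S_1f$, your base case $\mathcal A_0'=\int_M nHf\,dM_t$, and the subsequent cancellations are all stated with the correct signs.
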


In order to characterize spacelike immersions of constant $(r+1)-$th mean curvature, let $\lambda$ be a real constant and $\mathcal J_r:(-\epsilon,\epsilon)\rightarrow\mathbb R$ be the Jacobi functional associated to $X$, i.e.,
$$\mathcal J_r(t)=\mathcal A_r(t)-\lambda\mathcal V(t).$$
As an immediate consequence of (\ref{eq:first variation}) we get
$$\mathcal J_r'(t)=\int_M[b_rH_{r+1}+c_r-\lambda]fdM_t,$$
where $b_r=(r+1){n\choose r+1}$. Therefore, if we choose $\lambda=c_r+b_r\overline H_{r+1}(0)$, where
$$\overline H_{r+1}(0)=\frac{1}{\mathcal A_0(0)}\int_MH_{r+1}(0)dM$$
is the mean of the $(r+1)-$th curvature $H_{r+1}(0)$ of $M$, we arrive at
$$\mathcal J_r'(t)=b_r\int_M[H_{r+1}-\overline H_{r+1}(0)]fdM_t.$$
Hence, a standard argument (cf.~\cite{BdC:84}) shows that $M$ is a critical point of $\mathcal J_r$ for all variations of $x$ if and only if $M$ has constant $(r+1)-$th mean curvature.

As in~\cite{Barbosa:97}, we wish to study spacelike immersions $x:M^n\rightarrow\overline M^{n+1}$ that maximize $\mathcal J_r$ for all variations $X$ of $x$. The above dicussion shows that $M$ must have constant $(r+1)-$th mean curvature and, for such an $M$, leads us naturally to compute the second variation of $\mathcal J_r$. This, in turn, motivates the following

\begin{definition}
Let $\overline M^{n+1}_c$ be a Lorentz manifold of constant sectional curvature $c$, and $x:M^n\rightarrow\overline M^{n+1}$ be a closed spacelike hypersurface having constant $(r+1)-$th mean
curvature. We say that $x$ is strongly $r$-stable if, for every smooth function $f:M\rightarrow\mathbb R$ one has $\mathcal J_r''(0)\leq 0$.
\end{definition}

The sought formula for the second variation of $\mathcal J_r$ is another straightforward consequence of Proposition~\ref{prop:first variation}.

\begin{proposition}
Let $x:M^n\rightarrow\overline M^{n+1}_c$ be a closed spacelike hypersurface of constant $(r+1)-$mean curvature $H_{r+1}$. If $X:M^n\times(-\epsilon,\epsilon)\rightarrow\overline M^{n+1}_c$ is a variation of $x$, then
\begin{equation}\label{eq:second formula of variation}
\mathcal J_r''(0)=(r+1)\int_M\left[L_r(f)+c{\rm tr}(P_r)f-{\rm
tr}(A^2P_r)f\right]fdM.
\end{equation}
\end{proposition}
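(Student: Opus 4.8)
The plan is to compute $\mathcal{J}_r''(0)$ directly from the first-variation formula in Proposition~\ref{prop:first variation}, differentiating once more in $t$ and then evaluating at $t=0$ using the hypothesis that $H_{r+1}$ is constant. Concretely, recall that
$$\mathcal{J}_r'(t)=\int_M\left[(-1)^{r+1}(r+1)S_{r+1}+c_r-\lambda\right]f\,dM_t,$$
where $\lambda=c_r+b_r\overline H_{r+1}(0)$ is a fixed constant. Since we may restrict to variations whose variational field is $fN$ with $f$ arbitrary (using the remark after Lemma~\ref{lemma:first variation} in the volume-preserving case, or unconstrained variations otherwise), the tangential part $(\partial X/\partial t)^{\top}$ plays no role at $t=0$, and the term $\langle(\partial X/\partial t)^{\top},\nabla S_{r+1}\rangle$ in Lemma~\ref{lemma:computing the time-derivative of Sr} contributes nothing when we evaluate at $t=0$: indeed, because $H_{r+1}$ (equivalently $S_{r+1}$) is constant on $M$ at $t=0$, we have $\nabla S_{r+1}|_{t=0}=0$.

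First I would differentiate $\mathcal{J}_r'(t)$ with respect to $t$:
$$\mathcal{J}_r''(t)=\int_M\left[(-1)^{r+1}(r+1)\frac{\partial S_{r+1}}{\partial t}+\Big((-1)^{r+1}(r+1)S_{r+1}+c_r-\lambda\Big)\frac{\partial f}{\partial t}\right]dM_t+\int_M\big[\cdots\big]f\,\frac{\partial}{\partial t}(dM_t).$$
Now evaluate at $t=0$. The bracket $(-1)^{r+1}(r+1)S_{r+1}+c_r-\lambda$ equals $b_rH_{r+1}(0)+c_r-\lambda=b_r(H_{r+1}(0)-\overline H_{r+1}(0))$, which is identically zero on $M$ because $H_{r+1}$ is constant (so it equals its own mean). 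Hence the last two integrals vanish at $t=0$, and the only surviving contribution is
$$\mathcal{J}_r''(0)=(-1)^{r+1}(r+1)\int_M\left(\frac{\partial S_{r+1}}{\partial t}\Big|_{t=0}\right)f\,dM.$$
Substituting the expression for $\partial S_{r+1}/\partial t$ from Lemma~\ref{lemma:computing the time-derivative of Sr}, and again using $\nabla S_{r+1}|_{t=0}=0$ to kill the tangential term, gives
$$\frac{\partial S_{r+1}}{\partial t}\Big|_{t=0}=(-1)^{r+1}\left[L_r f+c\,{\rm tr}(P_r)f-{\rm tr}(A^2P_r)f\right],$$
so that the signs $(-1)^{r+1}\cdot(-1)^{r+1}=1$ cancel and we obtain exactly
$$\mathcal{J}_r''(0)=(r+1)\int_M\left[L_r(f)+c\,{\rm tr}(P_r)f-{\rm tr}(A^2P_r)f\right]f\,dM,$$
as claimed.

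The main thing to be careful about — the only genuine obstacle — is justifying that the ``error terms'' ($\partial f/\partial t$, the time-derivative of the volume element, and the tangential term $\langle(\partial X/\partial t)^{\top},\nabla S_{r+1}\rangle$) all drop out at $t=0$. All three do so for the same reason: each is multiplied either by the factor $b_r(H_{r+1}-\overline H_{r+1}(0))$, which vanishes on $M$ by the constancy hypothesis, or by $\nabla S_{r+1}|_{t=0}$, which vanishes for the same reason. This is the standard phenomenon that the second variation of a functional at a critical point is independent of the second-order data of the variation, and here the constancy of $H_{r+1}$ is precisely what makes $x$ a critical point of $\mathcal{J}_r$. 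Once this is observed, the computation is purely formal: plug in Lemma~\ref{lemma:computing the time-derivative of Sr}, cancel the sign $(-1)^{2(r+1)}=1$, and read off the result.
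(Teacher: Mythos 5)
Your computation is correct and is exactly the ``straightforward consequence of Proposition~\ref{prop:first variation}'' that the paper has in mind (the paper omits the details entirely): differentiate $\mathcal J_r'(t)$, note that the factor $b_r(H_{r+1}-\overline H_{r+1}(0))$ multiplying all second-order terms vanishes identically because $H_{r+1}$ is constant, and substitute Lemma~\ref{lemma:computing the time-derivative of Sr} with $\nabla S_{r+1}|_{t=0}=0$. Your identification of the critical-point phenomenon as the one genuine point to justify is exactly right.
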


Back to the conformally stationary setting, in what follows we need a formula first derived in~\cite{Alias:07}. As stated below, it is the Lorentz version of the one stated and proved in~\cite{Barros:09}.

\begin{lemma}\label{lemma:Lr of conformal vector field}
Let $\overline M^{n+1}_c$ be a conformally stationary Lorentz manifold having constant sectional curvature $c$ and conformal vector field $V$. Let also $x:M^n\rightarrow\overline M^{n+1}_c$ be a spacelike hypersurface and $N$ a future-pointing, unit normal vector field globally defined on $M^n$. If $\eta=\langle V,N\rangle$, then
\begin{equation}\label{eq:Laplacian formula_I}
\begin{split}
L_r\eta&\,={\rm tr}(A^2P_r)\eta-c\,{\rm tr}(P_r)\eta-b_rH_rN(\psi)\\
&\,\,\,\,\,\,\,+b_rH_{r+1}\psi+\frac{b_r}{r+1}\langle V,\nabla
H_{r+1}\rangle,
\end{split}
\end{equation}
where $\psi:\overline M^{n+1}\rightarrow\mathbb R$ is the conformal factor of $V$, $H_j$ is the $j-$th mean curvature of $x$ and $\nabla H_j$ stands for the gradient of $H_j$ on $M$.
\end{lemma}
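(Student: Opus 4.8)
The identity~\eqref{eq:Laplacian formula_I} is a computational fact; the plan is to differentiate the gradient of $\eta=\langle V,N\rangle$ a second time, trace the resulting Hessian against $P_r$, and then put the outcome into the stated shape. Throughout one must adapt every sign to the Lorentzian normalisation $\langle N,N\rangle=-1$ of the future-pointing unit normal, after which the computation parallels the Riemannian model of~\cite{Barros:09}.

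First I would decompose $V$ along $M$ as $V=V^{\top}-\eta N$ and recall $\nabla\eta=-A(V^{\top})$, which is~\eqref{eq:Gradient of f nu} written with $f_V=\eta$. Differentiating this covariantly on $M$, and using the conformality of $V$ (so that $\overline\nabla_XV=\psi X$, hence $\nabla_XV^{\top}=\psi X-\eta AX$ by the Gauss and Weingarten formulas), yields
\[
\text{Hess}\,\eta(X,Y)=-\langle(\nabla_XA)(V^{\top}),Y\rangle-\psi\langle AX,Y\rangle+\eta\langle A^{2}X,Y\rangle .
\]
Applying $P_r$ and tracing at a point in an orthonormal basis of eigenvectors of $A$ (which simultaneously diagonalises every $P_r$), the two middle terms contribute $-\psi\,\text{tr}(AP_r)=b_rH_{r+1}\psi$ and $\eta\,\text{tr}(A^{2}P_r)$, respectively, by identities (ii) and (iii). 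The remaining term $\text{tr}\bigl(P_r\circ[\,X\mapsto(\nabla_XA)(V^{\top})\,]\bigr)$ equals $\langle V^{\top},\text{div}(AP_r)\rangle$ because in a space form $\nabla A$ is a Codazzi tensor and $P_r$ is divergence-free (so that $L_r=\text{div}(P_r\nabla\,\cdot\,)$, as recalled); then the recurrence~\eqref{eq:Newton operators} together with the divergence-freeness of $P_{r+1}$ gives $\text{div}(AP_r)=(-1)^{r}\nabla S_{r+1}=-\tfrac{b_r}{r+1}\nabla H_{r+1}$, which (since $\nabla H_{r+1}$ is tangent) produces $\tfrac{b_r}{r+1}\langle V,\nabla H_{r+1}\rangle$. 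Hence $L_r\eta=\text{tr}(A^{2}P_r)\eta+b_rH_{r+1}\psi+\tfrac{b_r}{r+1}\langle V,\nabla H_{r+1}\rangle$, and this differs from~\eqref{eq:Laplacian formula_I} only by the quantity $-c\,\text{tr}(P_r)\eta-b_rH_rN(\psi)$, which vanishes identically in $\overline M^{n+1}_{c}$: differentiating $\overline\nabla_XV=\psi X$ and inserting the curvature tensor of a space form gives $\overline\nabla\psi=-cV$, so $N(\psi)=-c\eta$ and, by identity (i), $b_rH_rN(\psi)=-c\,b_rH_r\,\eta=-c\,\text{tr}(P_r)\eta$.

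I do not anticipate a real obstacle — this is a known computation, first carried out in~\cite{Alias:07} — but the one point demanding genuine care is the sign bookkeeping: the sign of the shape operator relative to the future-pointing $N$, the minus sign in $V=V^{\top}-\eta N$, the orientation in the Codazzi equation and in $\text{div}(P_r)=0$, and the conversions $\binom{n}{r+1}H_{r+1}=(-1)^{r+1}S_{r+1}$ and $\text{tr}(P_r)=b_rH_r$. Keeping all of these consistent is precisely what turns the argument of~\cite{Barros:09} into its Lorentzian counterpart.
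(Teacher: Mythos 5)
The paper does not actually prove this lemma --- it is quoted from~\cite{Alias:07} and~\cite{Barros:09} --- so your proposal is supplying a proof where the paper offers only a citation. Your computation itself is essentially correct: the Hessian formula for $\eta$, the traces against $P_r$ via identities (ii)--(iii), the use of the Codazzi equation and of ${\rm div}(P_r)=0$ in a space form (together with the recurrence~\eqref{eq:Newton operators}) to turn the $\nabla A$ term into $\tfrac{b_r}{r+1}\langle V,\nabla H_{r+1}\rangle$, and the observation that $\overline\nabla\psi=-cV$ forces $N(\psi)=-c\eta$ and hence $-c\,{\rm tr}(P_r)\eta-b_rH_rN(\psi)=0$, all check out with the paper's sign conventions.

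The one genuine issue is a hypothesis mismatch: every step of your argument uses $\overline\nabla_XV=\psi X$, i.e.\ that $V$ is a \emph{closed} conformal field, whereas the lemma assumes only that $V$ is conformal. For a merely conformal $V$ one has $\overline\nabla_XV=\psi X+\Phi X$ with $\Phi$ skew-adjoint, so already the gradient becomes $\nabla\eta=-A(V^{\top})-(\Phi N)^{\top}$ rather than~\eqref{eq:Gradient of f nu}, the expression for ${\rm Hess}\,\eta$ picks up extra terms, and --- crucially --- $\overline\nabla\psi$ is no longer $-cV$, so the terms $-c\,{\rm tr}(P_r)\eta$ and $-b_rH_rN(\psi)$ do not cancel and must each be produced by the computation instead of being appended as a vanishing correction. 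In effect you have proved the identity only in the degenerate case where its last two ``curvature/conformal-factor'' terms carry no independent information. This suffices for the paper's sole application (Theorem~\ref{thm:r_estability} assumes $V$ closed conformal), but it is not a proof of the lemma as stated; to obtain the general statement you would need to redo the computation keeping track of the skew part $\Phi$, as in~\cite{Alias:07}, or else add the word ``closed'' to the lemma's hypothesis.
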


We are now in position to state and prove the following

\begin{theorem}\label{thm:r_estability}
Let $\overline M^{n+1}_c$ be a timelike geodesically complete conformally stationary Lorentz manifold of constant sectional curvature $c$ having a closed conformal timelike vector field $V$, and let $x:M^n\rightarrow\overline M^{n+1}_c$ be a closed, strongly $r-$stable spacelike hypersurface. Suppose that the conformal factor $\psi$ of $V$ satisfies the condition
$$\frac{H_r}{\sqrt{-\langle V,V\rangle}}\frac{\partial\psi}{\partial t}\geq\max\{H_{r+1}\psi,0\},$$
where $t\in\mathbb R$ denotes the real parameter of the flow of $V$. If the set where $\psi=0$ has empty interior in $M$, then $M^n$ is either $r-$maximal or a leaf of the foliation $V^{\bot}$.
\end{theorem}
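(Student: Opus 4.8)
The plan is to test the strong $r$-stability inequality $\mathcal J_r''(0)\le 0$ against a judicious choice of test function built from the conformal support function $\eta=\langle V,N\rangle$. Since $N$ is future-pointing and $V$ is timelike, $\eta$ has a strict sign on $M$, so we may freely divide by it. Because $M$ is closed, any smooth $f$ is admissible (no boundary condition beyond that), so we simply take $f=\eta$. Substituting into the second variation formula \eqref{eq:second formula of variation},
$$\mathcal J_r''(0)=(r+1)\int_M\left[L_r\eta+c\,{\rm tr}(P_r)\eta-{\rm tr}(A^2P_r)\eta\right]\eta\,dM,$$
and plugging in the Hsiang--Montiel--type identity \eqref{eq:Laplacian formula_I} for $L_r\eta$, the two ``bad'' terms ${\rm tr}(A^2P_r)\eta$ and $c\,{\rm tr}(P_r)\eta$ cancel exactly, leaving
$$\mathcal J_r''(0)=(r+1)\int_M\left[-b_rH_rN(\psi)+b_rH_{r+1}\psi+\frac{b_r}{r+1}\langle V,\nabla H_{r+1}\rangle\right]\eta\,dM.$$
Here the last integrand involves $\nabla H_{r+1}$, but since $M$ has constant $(r+1)$-th mean curvature (part of the hypothesis of strong $r$-stability), $\nabla H_{r+1}=0$ and that term drops out.

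Next I would reinterpret $N(\psi)$. Writing $\nu=V/\sqrt{-\langle V,V\rangle}$, equation \eqref{eq:gradient conformal factor} shows $\overline\nabla\psi=-\nu(\psi)\nu$, hence for the future-pointing normal $N$ we get $N(\psi)=\langle\overline\nabla\psi,N\rangle=-\nu(\psi)\langle\nu,N\rangle=-\frac{\nu(\psi)}{\sqrt{-\langle V,V\rangle}}\langle V,N\rangle=-\frac{\nu(\psi)}{\sqrt{-\langle V,V\rangle}}\eta$. Moreover $\nu(\psi)=\frac{\partial\psi}{\partial t}$ up to the normalization of the flow parameter, so that $-H_rN(\psi)=\frac{H_r}{\sqrt{-\langle V,V\rangle}}\frac{\partial\psi}{\partial t}\,\eta$. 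Therefore
$$\mathcal J_r''(0)=(r+1)b_r\int_M\left[\frac{H_r}{\sqrt{-\langle V,V\rangle}}\frac{\partial\psi}{\partial t}-H_{r+1}\psi\right]\eta^2\,dM,$$
wait---one must be careful with signs here, since $\eta<0$ makes $\eta^2>0$ but the $H_{r+1}\psi\,\eta$ term carried only one factor of $\eta$; after collecting, the integrand acquires the overall positive weight $\eta^2$ only after rewriting $N(\psi)=-(\text{positive})\eta$. I would carefully track the signs so that the bracket ends up being exactly $\frac{H_r}{\sqrt{-\langle V,V\rangle}}\frac{\partial\psi}{\partial t}-H_{r+1}\psi$ times $\eta^2$, up to the positive constant $(r+1)b_r$.

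Now the hypothesis $\frac{H_r}{\sqrt{-\langle V,V\rangle}}\frac{\partial\psi}{\partial t}\ge\max\{H_{r+1}\psi,0\}$ forces the bracket to be $\ge 0$ pointwise, hence $\mathcal J_r''(0)\ge 0$; combined with strong $r$-stability, $\mathcal J_r''(0)=0$, so the nonnegative integrand vanishes identically: since $\eta^2>0$ everywhere, we conclude
$$\frac{H_r}{\sqrt{-\langle V,V\rangle}}\frac{\partial\psi}{\partial t}=H_{r+1}\psi\quad\text{and}\quad\frac{H_r}{\sqrt{-\langle V,V\rangle}}\frac{\partial\psi}{\partial t}=0$$
on all of $M$ (the second from $\max\{H_{r+1}\psi,0\}\le\text{(the quantity)}=0$ forcing both $\ge0$ parts to vanish). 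Hence $H_{r+1}\psi\equiv 0$ and $\frac{\partial\psi}{\partial t}\equiv 0$ on $M$. On the open dense set where $\psi\ne 0$ this gives $H_{r+1}=0$; by continuity $H_{r+1}\equiv 0$ on $M$, i.e.\ $M$ is $r$-maximal---unless the set where $\psi=0$ fails to have empty interior, which is precisely what is excluded. The alternative conclusion arises if instead $\psi\equiv 0$ on $M$: then $\langle V,N\rangle$-computations (or directly \eqref{eq:shape operator of a leaf in the perp distribution}) show $V$ is tangent to... actually $\psi\equiv0$ on $M$ together with the flow-parameter structure forces $x(M)$ to sit inside a leaf of $V^\bot$, by the umbilicity description of the leaves and the fact that the leaves are exactly the level sets $\{t=\text{const}\}$ under Montiel's global parametrization (the remark after Corollary~\ref{coro:maximal submanifold in GRW I}); here timelike geodesic completeness is used to invoke that global GRW structure. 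The main obstacle is the bookkeeping in the sign of $\eta$ and the normalization of $\partial/\partial t$ versus $\nu$, together with correctly extracting the dichotomy (``$r$-maximal'' vs.\ ``leaf'') from the two vanishing identities using the hypothesis that $\{\psi=0\}$ has empty interior.
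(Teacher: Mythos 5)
Your overall strategy is exactly the paper's: take $f=\eta=\langle V,N\rangle$ in the second variation formula, cancel $c\,{\rm tr}(P_r)\eta$ and ${\rm tr}(A^2P_r)\eta$ against Lemma~\ref{lemma:Lr of conformal vector field}, drop the $\nabla H_{r+1}$ term, and analyze the resulting integral pointwise. The gap is precisely at the sign/weight bookkeeping you flagged and then resolved incorrectly. Writing $Q=\frac{H_r}{\sqrt{-\langle V,V\rangle}}\frac{\partial\psi}{\partial t}$ and $\cosh\theta=-\langle\nu,N\rangle\geq 1$, one has $\eta=-\sqrt{-\langle V,V\rangle}\cosh\theta$ and $N(\psi)=Q\cosh\theta$, so the integrand $\left[-H_rN(\psi)+H_{r+1}\psi\right]\eta$ equals a positive weight times $Q\cosh\theta-H_{r+1}\psi$, \emph{not} times $Q-H_{r+1}\psi$: the factor $\cosh\theta$ survives on the first term only, because $N(\psi)$ carries one factor of $|\eta|$ while $H_{r+1}\psi$ carries none. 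This factor is not a nuisance to be absorbed into $\eta^2$; it is the whole point. From $Q\geq\max\{H_{r+1}\psi,0\}$ and $\cosh\theta\geq 1$ one gets $Q\cosh\theta\geq Q\geq H_{r+1}\psi$, so vanishing of the (now nonnegative) integrand forces $Q\cosh\theta=H_{r+1}\psi=Q$, i.e.\ the two identities $Q(\cosh\theta-1)=0$ and $Q=H_{r+1}\psi$. Your claimed second identity $Q\equiv 0$ does not follow from $Q=H_{r+1}\psi$ together with $Q\geq\max\{H_{r+1}\psi,0\}$ (these are perfectly consistent with $Q=H_{r+1}\psi>0$), and it would in fact prove that $M$ is always $r$-maximal, contradicting the corollary in which a non-maximal umbilical round sphere occurs.

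Consequently your extraction of the dichotomy is also off: the alternative ``leaf of $V^{\bot}$'' cannot come from ``$\psi\equiv 0$ on $M$'', which is exactly what the empty-interior hypothesis rules out. It comes from the first identity: since $H_{r+1}$ is constant, either $H_{r+1}=0$ (so $M$ is $r$-maximal), or $H_{r+1}\neq 0$ everywhere, in which case $H_{r+1}\psi=Q\neq 0$ on the dense set where $\psi\neq 0$, whence $\cosh\theta=1$ there and, by continuity, on all of $M$; then $N$ is parallel to $V$ and $x(M)$ lies in a leaf of $V^{\bot}$. The argument is repairable along these lines, but as written the key pointwise identities and the ensuing case analysis are incorrect.
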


\begin{proof}
Since $M^n$ is strongly $r$-stable, it follows from (\ref{eq:second formula of variation}) that
\begin{equation}\label{eq:auxiliar I para r_estability}
 (r+1)\int_M\left[L_r(f)+c{\rm tr}(P_r)f-{\rm tr}(A^2P_r)f\right]fdM\leq 0,
\end{equation}
for all smooth $f:M\rightarrow\mathbb R$. In particular, since $H_{r+1}$ is constant on $M$, taking $f=\eta=\langle V,N\rangle$ in (\ref{eq:Laplacian formula_I}), we get
$$L_r\eta+c\,{\rm tr}(P_r)\eta-{\rm tr}(A^2P_r)\eta=-b_rH_rN(\psi)+b_rH_{r+1}\psi,$$
so that (\ref{eq:auxiliar I para r_estability}) gives
\begin{equation}\label{eq:auxiliar para prova}
\int_M\left[-H_rN(\psi)+H_{r+1}\psi\right]\langle V,N\rangle dM\leq 0.
\end{equation}

However, it follows from (\ref{eq:gradient conformal factor}) that
$$N(\psi)=\langle N,\overline\nabla\psi \rangle=-\nu(\psi)\langle \nu,N\rangle=\frac{\partial\psi}{\partial t}\frac{\cosh\theta}{\sqrt{-\langle V,V\rangle}},$$
where $\theta$ is the hyperbolic angle between $V$ and $N$. Substituting the above into (\ref{eq:auxiliar para prova}), we finally arrive at
$$\int_M\left[H_r\frac{\partial\psi}{\partial t}\frac{\cosh\theta}{\sqrt{-\langle V,V\rangle}}-H_{r+1}\psi\right]\frac{\cosh\theta}{\sqrt{-\langle V,V\rangle}}dM\leq 0.$$

Arguing as in the end of the proof of Theorem 1.1 of~\cite{BBC:08}, we get
$$H_r\frac{\partial\psi}{\partial t}(\cosh\theta-1)=0\ \ \text{and}\ \ \frac{H_r}{\sqrt{-\langle V,V\rangle}}\frac{\partial\psi}{\partial t}=H_{r+1}\psi$$
on $M$. But since $H_{r+1}$ is constant on $M$, either $M$ is $r-$maximal or $H_{r+1}\neq 0$ on $M$. If this last case happens, the condition on the zero set of $\psi$ on $M$, together with the above, gives $H_r\frac{\partial\psi}{\partial t}\neq 0$ on a dense subset of $M$, and hence $\cosh\theta=1$ on this set. By continuity, $\cosh\theta=1$ on $M$, so that $M$ is a leaf of the foliation $\nu^{\bot}$.
\end{proof}

The following corollary is immediate.

\begin{corollary}
Let $x:M^n\rightarrow\mathbb S_1^{n+1}$ be a closed, strongly $r-$stable spacelike hypersurface, such that the set of points in which $M^n$ intersects the equator of $\mathbb S_1^{n+1}$ has empty interior in $M$. If $$H_r\geq\max\{(\sinh t)H_{r+1},0\},$$
then either $M^n$ is $r-$maximal or an umbilical round sphere.
\end{corollary}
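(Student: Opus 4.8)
The plan is to specialize Theorem~\ref{thm:r_estability} to the ambient $\overline M^{n+1}_c=\mathbb S_1^{n+1}$, which we identify with the GRW space $-\mathbb R\times_{\cosh t}\mathbb S^n$ of constant sectional curvature $c=1$, carrying the closed conformal timelike vector field $V=(\sinh t)\partial_t$. First I would record the basic data: the conformal factor of $V$ is $\psi=\cosh t$, so $\partial\psi/\partial t=\sinh t$; moreover along the flow lines one has $\langle V,V\rangle=-\sinh^2 t$, whence $\sqrt{-\langle V,V\rangle}=\abs{\sinh t}$ (positive off the equator). Substituting these into the curvature hypothesis of the theorem, the inequality
$$\frac{H_r}{\sqrt{-\langle V,V\rangle}}\frac{\partial\psi}{\partial t}\ge\max\{H_{r+1}\psi,0\}$$
becomes, after cancelling $\abs{\sinh t}$ against $\sinh t$ (with the appropriate sign bookkeeping), exactly
$$H_r\ge\max\{(\sinh t)H_{r+1},0\},$$
which is the stated assumption of the corollary. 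Thus the analytic hypothesis of Theorem~\ref{thm:r_estability} is verified.

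Next I would check the remaining hypotheses. The set where $\psi=\cosh t$ vanishes is empty, so a fortiori one might worry the theorem gives no content; but reading the theorem carefully, the relevant zero set in its proof is where $H_r\,\partial\psi/\partial t$ vanishes, and $\partial\psi/\partial t=\sinh t$ vanishes precisely on the equator $\{0\}\times\mathbb S^n$. Hence the correct reading of the corollary's hypothesis — ``the set of points where $M$ meets the equator has empty interior in $M$'' — is exactly the condition that the zero set of $\sinh t$ (equivalently of the quantity controlling $\cosh\theta=1$) has empty interior in $M$, matching the hypothesis ``the set where $\psi=0$ has empty interior'' once one translates $\psi$ in the theorem to the effective factor $\partial\psi/\partial t$ governing $N(\psi)$ via \eqref{eq:gradient conformal factor}. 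With this dictionary in place, Theorem~\ref{thm:r_estability} applies and yields that $M^n$ is either $r$-maximal or a leaf of the foliation $V^{\bot}$.

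Finally I would identify the leaves of $V^{\bot}$ in this ambient. By the discussion following \eqref{eq:ode for constant sectional curvature}, in the GRW model $-\mathbb R\times_{\cosh t}\mathbb S^n$ the distribution $V^{\bot}$ has as leaves the slices $\{t_0\}\times\mathbb S^n$, which are totally umbilical with umbilicity factor $-\tanh t_0$ with respect to the future-pointing unit normal; these are precisely the umbilical round spheres of $\mathbb S_1^{n+1}$. Therefore ``$M$ is a leaf of $V^{\bot}$'' reads ``$M$ is an umbilical round sphere,'' completing the proof. The main obstacle — really the only subtle point — is the sign/normalization bookkeeping in the first step: one must be careful that $\sqrt{-\langle V,V\rangle}=\abs{\sinh t}$ while $\partial\psi/\partial t=\sinh t$ carries a sign, so that the quotient $\frac{1}{\sqrt{-\langle V,V\rangle}}\frac{\partial\psi}{\partial t}$ equals $\operatorname{sgn}(\sinh t)$ rather than $1$, and to see that this is harmlessly absorbed because the hypothesis of the corollary is phrased with $(\sinh t)H_{r+1}$ (not $\abs{\sinh t}H_{r+1}$) and because on the chronological past/future the sign is locally constant; away from the equator everything is consistent, and the equator is negligible by hypothesis.
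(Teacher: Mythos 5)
Your overall strategy is the right one and is what the paper means by ``immediate'': specialize Theorem~\ref{thm:r_estability} to $\mathbb S_1^{n+1}\simeq-\mathbb R\times_{\cosh t}\mathbb S^n$ and identify the leaves of $V^{\bot}$ with the umbilical round spheres $\{t_0\}\times\mathbb S^n$. The gap is in your identification of the conformal data, and it is not merely cosmetic. On the GRW $-I\times_{\phi}F$ the closed conformal timelike field is $V=(\phi\circ\pi_I)\partial_t$ with conformal factor $\psi=\phi'\circ\pi_I$ (this is the general statement in Section~2); for $\phi=\cosh t$ this gives $V=(\cosh t)\partial_t$ and $\psi=\sinh t$, \emph{not} $V=(\sinh t)\partial_t$ with $\psi=\cosh t$. (The paper's own Example contains the same slip, but it cannot be right: $(\sinh t)\partial_t$ vanishes on the equator, so it is not timelike there, and a direct check of $\overline\nabla_X V=\psi X$ on vectors tangent to the slices shows it is not closed conformal.) With the correct data everything in the corollary lines up with the theorem verbatim: $\{\psi=0\}=\{\sinh t=0\}$ is exactly the set where $M$ meets the equator, $\sqrt{-\langle V,V\rangle}=\cosh t>0$ so no sign bookkeeping is needed, and since $\max\{(\sinh t)H_{r+1},0\}\geq 0$ the hypothesis $H_r\geq\max\{(\sinh t)H_{r+1},0\}$ forces $H_r\geq 0$, whence $\frac{H_r}{\cosh t}\frac{\partial\psi}{\partial t}\geq H_r\geq\max\{H_{r+1}\psi,0\}$, which is the theorem's inequality.

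With your data, by contrast, the theorem's inequality specializes to $H_r\operatorname{sgn}(\sinh t)\geq\max\{(\cosh t)H_{r+1},0\}$ --- the factor multiplying $H_{r+1}$ on the right is $\psi$, so it would be $\cosh t$, not $\sinh t$ --- and this is not the hypothesis of the corollary; your claim that the inequality ``becomes exactly'' the stated one after cancelling $\abs{\sinh t}$ against $\sinh t$ is therefore incorrect. The same misidentification makes $\{\psi=0\}$ empty, which is why you are forced into the ad hoc ``translation'' of the theorem's hypothesis on $\{\psi=0\}$ into a condition on $\partial\psi/\partial t$. That move is not licensed: the theorem's proof uses the density of $\{\psi\neq 0\}$ precisely through the identity $\frac{H_r}{\sqrt{-\langle V,V\rangle}}\frac{\partial\psi}{\partial t}=H_{r+1}\psi$, and one may not substitute a different function into the hypothesis of a theorem being quoted as a black box. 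Once you replace $(V,\psi)$ by $((\cosh t)\partial_t,\sinh t)$ the corollary really is an immediate specialization, and your final step identifying leaves of $V^{\bot}$ with totally umbilical round spheres goes through unchanged.
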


\section*{Acknowledgements}
This work was started when the fourth author was visiting the Mathematics and Statistics Departament of the
Universidade Federal de Campina Grande. He would like to thank this institution for its hospitality.

\end{document}